\numberwithin{equation}{section}
\newcommand\reallywidehat[1]{\arraycolsep=0pt\relax%
\begin{array}{c}
\stretchto{
  \scaleto{
    \scalerel*[\widthof{\ensuremath{#1}}]{\kern-.5pt\bigwedge\kern-.5pt}
    {\rule[-\textheight/2]{1ex}{\textheight}} 
  }{\textheight} %
}{0.5ex}\\           
#1\\                 
\rule{-1ex}{0ex}
\end{array}
}
\renewcommand\eqref[1]{(\ref{#1})} 
 \newtheorem{thm}{Theorem}[section]
 \newtheorem{cor}[thm]{Corollary}
 \newtheorem{lem}[thm]{Lemma}
 \newtheorem{defn}[thm]{Definition}
 \newtheorem{rem}[thm]{Remark}
 \numberwithin{equation}{section}
\newcommand{\half}{\frac{1}{2}}
\newcommand{\sdual}{\frac{1}{p}+\frac{1}{q}=1}
\newcommand{\ene}{\mathbb{N}}
\newcommand{\ar}{\mathbb{R}}
\newcommand{\ce}{\mathbb{C}}
\newcommand{\arn}{{\mathbb{R}}^n}
\newcommand{\efee}{\mathcal{F}}
\newcommand{\bi}{\begin{itemize}}
\newcommand{\ei}{\end{itemize}}
\newcommand{\be}{\begin{enumerate}}
\newcommand{\ee}{\end{enumerate}}
\newcommand{\beq}{\begin{equation}}
\newcommand{\eq}{\end{equation}}
\newcommand{\lap}{\mathcal{L}_G}
\newcommand{\lapk}{\mathcal{L}_{G/K}}
\newcommand{\cdxi}{\ce^{d_{\xi}\times d_{\xi}}}
\def\jp#1{{\left\langle{#1}\right\rangle}}
\def\Rep{{{\rm Rep}}}
\DeclareMathOperator{\Lip}{Lip}
\DeclareMathOperator{\Ob}{O}
\DeclareMathOperator{\Tr}{Tr}
\def\Ghz{{\widehat{G}}_0}
\def\Gh{{\widehat{G}}}
\def\HS{{\mathtt{HS}}}
\def\T{{{\mathbb T}^1}}
\def\Z{{{\mathbb Z}}}
\def\SU2{{{\rm SU(2)}}}
\def\SO3{{{\rm SO(3)}}}
\def\lapsu2{{{\mathcal L}_\SU2}}
\newcommand{\jpb}{\langle\xi\rangle}
\def\whf{\widehat{f}}
\begin{document}
\title[Titchmarsh theorems for H\"older-Lipschitz functions]{Titchmarsh theorems for Fourier transforms of H\"older-Lipschitz functions on compact homogeneous manifolds}

\author[Radouan Daher]{Radouan Daher}


\address{Department of Mathematics\\
University of Hassan II\\
B.P. 5366 Maarif, Casablanca\\
Morocco}

\email{rjdaher024@gmail.com}

\thanks{The second author was supported by the
Leverhulme Research Grant RPG-2014-02.
The third author was supported by
 the EPSRC Grant EP/K039407/1.
 No new data was collected or generated during the course of research.}


\author[Julio Delgado]{Julio Delgado}

 
\address{Department of Mathematics\\
Imperial College London\\
180 Queen's Gate, London SW7 2AZ\\
United Kingdom}

\email{j.delgado@imperial.ac.uk}

\author{Michael Ruzhansky}

\address{Department of Mathematics\\
Imperial College London\\
180 Queen's Gate, London SW7 2AZ\\
United Kingdom}

\email{m.ruzhansky@imperial.ac.uk}

\subjclass[2010]{Primary 43A30, 43A85; Secondary 22E30.}

\keywords{Compact Homogeneous manifolds, Compact Lie groups, Lipschitz functions. }

\date{\today}

\begin{abstract}
In this paper we extend classical Titchmarsh theorems on the Fourier transform of H\"older-Lipschitz functions to the setting of 
 compact homogeneous manifolds. As an application, we derive a Fourier multiplier theorem for $L^2$-H\"older-Lipschitz spaces on compact Lie groups. We also derive conditions and a characterisation for Dini-Lipschitz classes on compact homogeneous manifolds in terms of the behaviour of their Fourier coefficients.
\end{abstract}

\maketitle
\section{Introduction}

The studies of the convergence and of the rate of decay of Fourier coefficients are among the most classical problems in Fourier analysis. 
  Starting from the Riemann-Lebesgue theorem relating the integrability of a function on the torus $\T$ and the convergence of its Fourier coefficients, through the Hausdorff-Young inequality relating the integrability of a function and of its Fourier transform.
 Furthermore one can relate the smoothness of a function on the torus and the rate of decay  of its Fourier coefficients: indeed if $f\in C^k(\T)$ for some $k\geq 1$,   one has
  \[\widehat{f}(j)=o(|j|^{-k}),\]
that is, $|j|^k|\widehat{f}(j)|\rightarrow 0$ as $j\rightarrow\infty.$ An extension of such statement below the index $k=1$ can be obtained in the setting of  H\"older regularity. If $f$ satisfies the H\"older condition of order $0<\alpha\leq 1$ then 
\[\widehat{f}(j)=\Ob(|j|^{-\alpha}).\]
This time we only get big-O (Landau's notation). For the case $\alpha>\half$ this estimate can be improved as a consequence of the classical Bernstein theorem (see Bernstein \cite{be:fa} or Zygmund \cite[vol. I, p. 240]{az:st}): if $f$
 satisfies the H\"older condition of order $\alpha>\half$ then its Fourier series converges absolutely. It was also established by Bernstein that the index $\half$ is sharp. An introduction to the  classical topics  discussed above can be found in \cite{steins:fa}.

\medskip
The Hausdorff-Young inequality on a locally compact abelian group $G_1$ states that $\whf\in L^q$ if  $f\in L^p(G_1)$ for $1\leq p\leq 2$ and $\sdual$. 
 In \cite{tich:b1}, Edward Charles Titchmarsh studied the problem of how much this fact can be strengthened if $f$ additionally satisfies a Lipschitz condition in the case of $G_1=\ar$. In fact, Titchmarsh showed that the information on $\whf$ can be significantly improved.  In \cite{myou:lpli}, \cite{myou:li1}, \cite{myou:li2}, M. Younis extended the Titchmarsh theorems to the case of the circle and to the very specific case of compact groups, namely, the $0$-dimensional groups.
 
 \medskip
The aim of this paper is to extend the classical Titchmarsh theorems to the setting of general compact homogeneous manifolds. In particular, this includes higher dimensional tori, compact Lie groups, real, complex or quaternionic spheres, projective spaces, and many others. As an application of such an extension, we derive a Fourier multiplier theorem for $L^2$-Lipschitz spaces.

\medskip
The analysis of various spaces and functional inequalities on compact Lie groups and compact homogeneous manifolds, despite its own interest and long history, has also a range of applications linking the local properties of functions with the global spectral properties of the group and to its representation theory, see e.g. Pesenson \cite{Pesenson:Besov-2008} for Bernstein-Nikolskii inequalities and links to the approximation theory, \cite{Pesenson-PW} for links to frames construction and Geller and Pesenson \cite{Geller-Pesenson:frames-Besov} for Besov spaces, Kushpel \cite{Kushpel-cubature} for cubature formulae, \cite{dr:gevrey} for Gevrey spaces with further links to weakly hyperbolic partial differential equations involving Laplacian and sub-Laplacian,
\cite{NRT-Pisa} for Nikolskii inequality and Besov spaces, as well as further references in these papers.
The convergence of the Fourier series of functions on compact Lie groups has a long history, see e.g., to mention only very few, Taylor \cite{Taylor:Fourier}, Sugiura \cite{sugiura:p1}, and some review in Faraut's book \cite{Faraut:book}. We can also refer to \cite{Benke1} for conditions for Lipschitz functions on compact totally disconnected groups, \cite{Platonov05} on symmetric spaces of rank 1,
\cite{QY1} for Fourier series on Vilenkin groups (compact, metrisable, $0$-dimensional, abelian groups), and results for Lipschitz multipliers in \cite{QY3}, \cite{QY2}, \cite{Quek85} and \cite{Bloom81} on compact $0$-dimensional, locally compact abelian, and abelian $0$-dimensional groups, respectively.

We note that for convolution operators (on compact Lie groups) the rate of decay of Fourier coefficients of the kernel is also related to the Schatten-von Neumann properties of the operator, see e.g.
\cite{dr13:schatten} and \cite{dr13a:nuclp}.
 
We also note that anisotropic H\"older spaces (with respect to the group structure) have been also analysed on the Heisenberg group
in \cite{FS-1974} (see also \cite{Folland-1975}) with applications to partial differential equations, and some Fourier transform description was considered in \cite{Younis-Heisenberg}.
Anisotropic H\"older spaces on more general Carnot groups were recently applied in \cite{RS-AM} to questions in the potential theory.
 
 \medskip
Thus, the first Titchmarsh theorem that we deal with (Titchmarsh \cite[Theorem 84]{tich:b1}) in its version on the torus (\cite[Theorem 2.6]{myou:lpli}) is recalled as Theorem A below.
First, for $0<\alpha\leq 1$ and $1<p\leq \infty$, we define the space $\Lip_{\T}(\alpha ;p)$ by
\beq\Lip_{\T}(\alpha;p):=\{f\in L^p(\T):\|f(h+\cdot )-f(\cdot)\|_{L^p(\T)}=\Ob(h^{\alpha}) \mbox{ as }h\rightarrow 0\}.\label{lip1}\eq

\medskip
\noindent {\bf{Theorem A.}}{\textit{ Let $0<\alpha\leq 1$ and $1<p\leq 2$. If $f\in \Lip_{\T}(\alpha;p)$, then $\widehat{f}\in \ell^{\beta}(\Z)$ for $\frac{p}{p+\alpha p-1}<\beta\leq q$, $\sdual$.}}\\

In order to illustrate the improvement in the conclusion $\widehat{f}\in \ell^{\beta}(\Z)$ in Theorem A in comparison to the Hausdorff-Young theorem, consider for instance $f\in \Lip_{\T}(\half;2)$, then one has $\frac{p}{p+\alpha p-1}=1,$ and thus $\widehat{f}\in \ell^{\beta}(\Z)$ for all $\beta\in (1,2].$

\smallskip
In Theorem A$'$ and Theorem \ref{Ti1} we extend Theorem A to the setting of general compact homogeneous manifolds, also slightly sharpening the conclusion.

\smallskip
We point out that our proof of Theorem A$'$ (through Theorem \ref{Ti1}) for compact homogeneous manifolds also simplifies the one by Younis in some aspects already on the circle $\T$, particularly avoiding the use of combinatorial arguments for summations by parts. The fact that the lower bound $\frac{p}{p+\alpha p-1}$ is sharp can be proved by means of the function $$f(x)=\sum\limits_{n=1}^{\infty}\frac{e^{in \log n}}{n^{\half+\alpha}}e^{inx} \quad(0<\alpha<1)$$ introduced by Hardy and Littlewood. Indeed, one has 
 $f\in \Lip_{\T}(\alpha;2)$ but $\widehat{f}\notin \ell^{\frac{2}{2\alpha+1}}(\Z)$ (see Zygmund \cite[vol 1, p. 243]{az:st}).

\medskip
 
The second Titchmarsh theorem (Titchmarsh \cite[Theorem 85]{tich:b1}) gives the necessary and sufficient conditions on the remainder of the series of Fourier coefficients for $L^2$-Lipschitz functions, i.e. for the case $p=2$. In the case of the circle (\cite[Theorem 2.17]{myou:lpli}) it can be stated as follows:

\medskip
\noindent {\bf{Theorem B.}}{\textit{ Let $0<\alpha\leq 1$ and $f\in L^2(\T)$. Then 
$f\in \Lip_{\T}(\alpha;2)$ if and only if }
\[\sum\limits_{|j|\geq N}|\widehat{f}(j)|^2=\Ob(N^{-2\alpha}) \mbox{ \em as } N\rightarrow\infty.\]

We will now briefly outline our main results in the setting of general compact homogeneous manifolds. If $K$ is a closed subgroup of a compact Lie group $G$, the quotient space $G/K$ can be canonically identified with an analytic manifold $M$. Henceforth $M$ will denote a compact homogeneous manifold. In the case when $K=\{e\}$ is the identity of the group $G$, we have $M=G$ is a compact Lie group itself. To simplify the notation in the sequel we will identify representations of $G$ with their equivalence classes.

We denote by $\Gh$ the unitary dual of $G$, and 
by $\Ghz$ the subset of $\Gh$ consisting of representations $\xi$ of
class I with respect to the subgroup $K$: this means that $[\xi]\in\Ghz$ if $\xi$ has at least one non-zero invariant vector with respect to $K$, i.e.
 that for some non-zero $a$ in the representation space of $\xi$ we have $\xi(h)a=a$ for all $h\in K$. We will also denote by $k_{\xi}$  the number of invariant vectors of the representation $\xi$ with respect
to $K$. 

In particular, if $K=\{e\}$ so that $G/K=G$ is a compact Lie group, then $k_\xi=d_\xi$ is the dimension of the representation $\xi\in\Gh$. If the subgroup $K$ is massive, that is if there is exactly one invariant vector for each class I representation, then we have $k_\xi=1$ for all $\xi\in\Gh_0$,
and $k_\xi=0$ for all $\xi\in\Gh\backslash\Gh_0$.

In order to extend the definition \eqref{lip1} on the circle to a compact homogeneous manifold $M=G/K$ we will employ the geodesic distance on $M$. For the distance from the `unit' element $eK$ to $h\in M$ we will write $|h|=d(h,e)$. This convention is not restrictive since the analysis of our problems is local.

\begin{defn}\label{lip2a} Let $0<\alpha\leq 1$ and $1\leq p\leq\infty$. We define the space $\Lip_{G/K}(\alpha ;p)$ by
\[\Lip_{G/K}(\alpha;p):=\{f\in L^p(G/K):\|f(h\cdot)-f(\cdot)\|_{L^p(G/K)}=\Ob(|h|^{\alpha}) \mbox{ as }|h|\rightarrow 0\},\]
that is, by the condition that
\begin{equation}\label{EQ:ll1}
\int_{G/K}|f(hx)-f(x)|^pdx=\Ob(|h|^{\alpha p})  \; \mbox{ as } \; |h|\rightarrow 0,
\end{equation}
for $1\leq p<\infty$, with a natural modification for $p=\infty$.
\end{defn}
The space $\Lip_{G/K}(\alpha;p)$ endowed with the norm 
\[\|f\|_{\Lip_{G/K}(\alpha;p)}:=\sup\limits_{|h|\neq 0}|h|^{-\alpha}\|f(h\cdot)-f(\cdot)\|_{L^p(G/K)}\]
becomes a Banach space.

\medskip
If $f\in \Lip_{G/K}(\alpha;p)$ we say that $f$ is $(\alpha;p)$-Lipschitz. Since $K$ acts on $G/K$ from right, it is natural to consider left shifts $f(h\cdot)$ with respect to $h$ in Definition \ref{lip2a}.
However, if $K=\{e\}$ and our homogeneous space is the Lie group $G$ itself, one can also consider right shifts by $h$, and to define the right-Lipschitz space by
taking $f(\cdot h)$ instead of  $f(h\cdot)$ in the Definition \ref{lip2a}, namely, by the condition
\begin{equation}\label{EQ:ll2}
\int_{G/K}|f(xh)-f(x)|^pdx=\Ob(|h|^{\alpha p}) \; \mbox{ as } \; |h|\rightarrow 0,
\end{equation}
for $1\leq p<\infty$, with a natural modification for $p=\infty$.

Some remarks regarding the equivalence between these two notions in the case of groups are included at the end of Section \ref{SEC:Prelim} in Remark \ref{equiv1}.

\medskip
On the other hand, for $1\leq p\leq\infty$ we can also define $p-$Lipschitz spaces on $G$ via localisations. Given $f\in L^p(G/K)$, we say that 
\begin{equation}\label{EQ:lipp1}
f\in \widetilde{\Lip_{G/K}}(\alpha;p)
\;\textrm{ if }\; \chi^*(\chi f)\in \Lip_{\arn}(\alpha;p)
\end{equation}
 for all coordinate chart cut-off functions $\chi$, where $\chi^*$ denotes the pull-back by $\chi$. We can observe that
 for $p=\infty$, with natural modifications of the definitions, we have
 \begin{equation}\label{EQ:lip}
\widetilde{\Lip_{G/K}}(\alpha;\infty)
= {\Lip_{G/K}}(\alpha;\infty).
\end{equation}
In view of Lemma \ref{ewrt1}, it is enough to show this for compact Lie groups.
The inclusion $\widetilde{\Lip_{G/K}}(\alpha;\infty)
\subset{\Lip_{G/K}}(\alpha;\infty)$ is obvious due to the compactness of $G/K$.
The converse also follows if we take $h=\exp(h_j X_j)$, $j=1,\ldots,\dim G$,  for small real numbers $h_j$, with $\exp:\mathfrak g\to G$ the exponential mapping near $e$, and $\{X_j\}_{j=1}^{\dim G}$ a basis of $\mathfrak g$, e.g. orthonormalised with respect to the Killing form.
Using that the push-forwards $\chi^*$ by coordinate chart maps have non-vanishing Jacobians, compactness, and adapting the argument above in the other direction, the equality \eqref{EQ:lip} can be extended to any $p$, namely, we have
 \begin{equation}\label{EQ:lip2}
\widetilde{\Lip_{G/K}}(\alpha;p)
= {\Lip_{G/K}}(\alpha;p),\quad 1\leq p\leq\infty.
\end{equation}

\medskip
The main results of this work are
Theorem A$'$ and Theorem B$'$ below, extending the statements of
Theorem A and Theorem B, respectively, from the circle to a compact homogeneous manifolds
$G/K$.

\medskip
\noindent {\bf{Theorem A$'$.}}{\textit{ 
Let $G/K$ be a compact homogeneous manifold of dimension $n$.
Let $0<\alpha\leq 1$, $1<p\leq 2$, and let $q$ be such that $\sdual$.
Let $f\in \Lip_{G/K}(\alpha;p)$. 
Then 
$$\reallywidehat{(I-\lapk)^{\half}f}\in \ell^{\beta}(\Ghz) \;\textrm{  for }\; \frac{n}{\alpha+n-\frac{n}{p}-1}\leq \beta\leq q,$$ 
where $\lapk$ is the Laplacian on $G/K$. \\
Consequently, we also have 
$$\whf\in \ell^{\gamma}(\Ghz)\;\textrm{ for }\;\frac{np}{\alpha p+np-n}<\gamma\leq q.$$ }}

\medskip
Here for a sequence $\sigma(\xi)\in\ce^{d_\xi\times d_\xi}$
the norm $\ell^\beta(\Ghz)$ for $1\leq\beta<\infty$ is defined by
\begin{equation}\label{EQ:lpdef}
\|\sigma\|_{\ell^{\beta}(\Ghz)}:=\sum\limits_{[\xi]\in\Ghz}d_{\xi} k_\xi\left(\frac{\|\sigma(\xi)\|_{\HS}}{\sqrt{k_{\xi}}}\right)^{\beta},
\end{equation}
where $\|\sigma(\xi)\|_{\HS}=\sqrt{\Tr(\sigma(\xi)\sigma(\xi)^*)}$ is the usual Hilbert-Schmidt norm.
This is a natural family of spaces on $\Ghz$, extending the $\ell^2$-norm appearing in the  Plancherel identity in \eqref{EQ:Planch}, also satisfying the Hausdorff-Young inequalities on $G/K$ and many other functional analytic properties.
These spaces were analysed in \cite[Section 2]{NRT-Pisa} in the setting of compact homogeneous manifolds, extending the corresponding definition on the compact Lie groups introduced earlier in
\cite[Section 10.3.3]{rt:book}, in which case we would have $\Ghz=\Gh$ and $k_\xi=d_\xi$: see
Definition \ref{DEF:lpG} for this case.
The Fourier coefficients of functions on $G/K$, although matrices in $\ce^{d_\xi\times d_\xi}$, have a lot of zeros, see \eqref{EQ:clft1} and \eqref{EQ:clft2}. This explains the factor $\sqrt{k_\xi}$ appearing in the denominator in \eqref{EQ:lpdef}. The spaces $\ell^\beta(\Ghz)$ are interpolation spaces, with embeddings into another known family of $\ell^\beta$-spaces defined using Schatten norms, we refer to
\cite[Section 2]{NRT-Pisa} for a description of functional analytic properties of these spaces and to 
\cite[Section 2.1.4]{FR-book} for the embedding properties.

\medskip
Clearly (see Remark \ref{REM:circle}) Theorem A$'$ implies Theorem A when $G=\T$ and $K=\{0\}$, so that $G/K=\T$.
Since both Theorem A$'$ and Theorem B$'$ absorb Titchmarsh theorems in the case of the torus they are also sharp.

\medskip
\noindent {\bf{Theorem B$'$.}}{\textit{ Let $0<\alpha\leq 1$ and $f\in L^2(G/K)$. Then 
$f\in \Lip_{G/K}(\alpha;2)$ if and only if }
\begin{equation}\label{EQ:b1}
\sum\limits_{\xi\in\Ghz,\langle\xi\rangle\geq N}d_{\xi}\|\widehat{f}(\xi)\|_{\HS}^2=\Ob(N^{-2\alpha}) \mbox{ \em as } N\rightarrow\infty.
\end{equation}

\medskip
Here $\jp{\xi}$ are the eigenvalues of the elliptic operator $(I-\lapk)^{1/2}$ corresponding to the representations $\xi\in\Ghz$, where $\lapk$ is the Laplacian on $G/K$: namely, we have
$(I-\lapk)^{1/2}\xi_{ij}=\jp{\xi}\xi_{ij}$ for all $\xi\in\Ghz$, $1\leq i\leq d_\xi$ and $1\leq j\leq k_\xi$.
The choice of the range of the indices $i,j$ happens in this way since these are the only representation coefficients interacting with the Fourier coefficients of functions on $G/K$, 
see \eqref{EQ:clft1} and \eqref{EQ:clft2}. In the case of Lie groups the quantity $\jp{\xi}$ is also explained in detail in \eqref{EQ:Lap-lambda}.

\medskip
Theorem B$'$ implies Theorem B since the Plancherel formula on $G/K$ takes the form
\begin{equation}\label{EQ:Planch}
\sum\limits_{\xi\in\Ghz}d_{\xi}\|\widehat{f}(\xi)\|_{\HS}^2=\|f\|_{L^2(G/K)}^2,
\end{equation}
so that the left hand side of \eqref{EQ:b1} is the remainder of the Fourier series decomposition of $f\in L^2(G/K)$, see also \eqref{EQ:Parseval}.

\medskip
In view of Lemma \ref{ewrt1}, Theorem A$'$ and Theorem B$'$ follow from their respective versions on compact Lie groups formulated in Theorem \ref{Ti1}  and Theorem \ref{dulem}, respectively.

\medskip
As an application of the obtained characterisations, we will show a Fourier multiplier theorem for H\"older/Lipschitz spaces on compact Lie groups. Here, the Fourier multiplier $A$ with symbol $a(\xi)\in \ce^{d_{\xi}\times d_{\xi}}$ for each $\xi$ acts by multiplication by  $a(\xi)$ on the Fourier transform side, i.e. by
$$
\widehat{Af}(\xi)=a(\xi)\widehat{f}(\xi).
$$
The matrix $a(\xi)$ arises as a linear transformation $a(\xi):\ce^{d_\xi}\to \ce^{d_\xi}$, where $\ce^{d_\xi}$ is the representation space of $\xi$ after fixing some (any) basis. We denote by
$\|a(\xi)\|_{op}=\|a(\xi)\|_{\mathcal L(\ce^{d_\xi},\ce^{d_\xi})}$ the operator norm of this linear transformation, with $\ce^{d_\xi}$ equipped with the usual Euclidean distance. If $\mathcal{L}_G$ 
is the Laplacian on $G$ (or the Casimir element of the universal enveloping algebra), we denote by $\jp{\xi}$ the eigenvalue of the elliptic pseudo-differential operator $(I-\mathcal{L}_G)^{1/2}$ corresponding to $\xi$, i.e. 
$$
(I-\mathcal{L}_G)^{1/2}\xi_{ij}(x)=\jp{\xi}\xi_{ij}(x)\qquad\textrm{ for all } 1\leq i,j\leq d_{\xi},
$$
see \eqref{EQ:jpxi} for more explanation.

\begin{cor}\label{c1q0} 
Let $0\leq\gamma<1$ and let $a:\Gh\rightarrow\bigcup\limits_{d\in\ene}\ce^{d\times d}$ be such that $a(\xi)\in \ce^{d_{\xi}\times d_{\xi}}$ for each $\xi\in\Gh$ and
\[\|a(\xi)\|_{op}\leq C\jp{\xi}^{-\gamma}. \]
Then the Fourier multiplier $A$ by symbol $a$, 
\[A:\Lip_{G}(\alpha;2)\rightarrow\Lip_{G}(\alpha+\gamma;2) \]
is bounded for all $\alpha$ such that $0<\alpha<1-\gamma$.
\end{cor}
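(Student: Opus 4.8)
The plan is to reduce the statement entirely to the Fourier-side characterisation of $L^2$-Lipschitz spaces furnished by Theorem B$'$, specialised to the Lie group case $K=\{e\}$ (where $\Ghz=\Gh$ and $k_\xi=d_\xi$), namely to Theorem \ref{dulem}. That result says that for $0<\beta\leq 1$ a function $g\in L^2(G)$ belongs to $\Lip_{G}(\beta;2)$ if and only if
\[
\sum_{\jp{\xi}\geq N} d_\xi \,\|\widehat{g}(\xi)\|_{\HS}^2 = \Ob(N^{-2\beta}) \qquad\text{as } N\to\infty .
\]
Hence it suffices to verify this tail estimate for $g=Af$ with exponent $\beta=\alpha+\gamma$, starting from the one available for $f$ with exponent $\alpha$.

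First I would note that $A$ is bounded on $L^2(G)$: since the eigenvalues satisfy $\jp{\xi}\geq 1$, the hypothesis gives $\|a(\xi)\|_{op}\leq C\jp{\xi}^{-\gamma}\leq C$, so by the Plancherel identity \eqref{EQ:Planch} we have $Af\in L^2(G)$ whenever $f\in L^2(G)$, and Theorem \ref{dulem} applies to $Af$. From the definition $\widehat{Af}(\xi)=a(\xi)\widehat{f}(\xi)$ of the multiplier and the ideal property of the Hilbert-Schmidt norm under left multiplication by a bounded matrix, I obtain the pointwise bound
\[
\|\widehat{Af}(\xi)\|_{\HS}=\|a(\xi)\widehat{f}(\xi)\|_{\HS}\leq \|a(\xi)\|_{op}\,\|\widehat{f}(\xi)\|_{\HS}\leq C\jp{\xi}^{-\gamma}\,\|\widehat{f}(\xi)\|_{\HS}.
\]

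The one real idea is a monotonicity observation on the tail. Because $\gamma\geq 0$, on the summation range $\jp{\xi}\geq N$ the factor $\jp{\xi}^{-2\gamma}$ does not exceed $N^{-2\gamma}$, so it may be extracted from the sum:
\[
\sum_{\jp{\xi}\geq N} d_\xi \,\|\widehat{Af}(\xi)\|_{\HS}^2 \leq C^2\sum_{\jp{\xi}\geq N} d_\xi\, \jp{\xi}^{-2\gamma}\|\widehat{f}(\xi)\|_{\HS}^2 \leq C^2 N^{-2\gamma}\sum_{\jp{\xi}\geq N} d_\xi\, \|\widehat{f}(\xi)\|_{\HS}^2 .
\]
Applying Theorem \ref{dulem} to $f\in\Lip_G(\alpha;2)$ bounds the last sum by $\Ob(N^{-2\alpha})$, whence the whole quantity is $\Ob(N^{-2(\alpha+\gamma)})$.

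Finally I would check the admissibility of the exponent: the assumption $0<\alpha<1-\gamma$ ensures $0<\alpha+\gamma<1$, so $\alpha+\gamma$ lies in the range $(0,1]$ for which Theorem \ref{dulem} characterises $\Lip_G(\alpha+\gamma;2)$; invoking its ``if'' direction with $g=Af$ and $\beta=\alpha+\gamma$ gives $Af\in\Lip_G(\alpha+\gamma;2)$, with the norm controlled by $C\|f\|_{\Lip_G(\alpha;2)}$. I do not anticipate a genuine obstacle: the entire substance is already contained in Theorem B$'$/Theorem \ref{dulem}, and the only delicate point is that the gain of $\gamma$ smoothness arises exactly from the monotone decay of $\jp{\xi}^{-2\gamma}$ across the tail, which is precisely why $\alpha$ must be restricted so as to keep $\alpha+\gamma$ strictly below the endpoint $1$.
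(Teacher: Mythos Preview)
Your argument is correct and is essentially identical to the paper's own proof: both apply the tail characterisation of $\Lip_G(\cdot;2)$, use $\|a(\xi)\widehat f(\xi)\|_{\HS}\le\|a(\xi)\|_{op}\|\widehat f(\xi)\|_{\HS}$, pull out $\jp{\xi}^{-2\gamma}\le N^{-2\gamma}$ on the tail, and then invoke the converse direction for the exponent $\alpha+\gamma$. One labelling remark: the characterisation you invoke is Theorem~\ref{Ti2}, not Theorem~\ref{dulem} (the latter is Duren's lemma); the paper's introduction contains the same slip.
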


For example, the matrix symbol of the Bessel potential $A=(I-\mathcal{L}_G)^{-\gamma/2}$ is $a(\xi)=\jp{\xi}^{-\gamma} I_{d_\xi}$, where $I_{d_\xi}\in\mathbb C^{d_\xi\times d_\xi}$ is the identity matrix, so that $\|a(\xi)\|_{op}=\jp{\xi}^{-\gamma}.$

\medskip
 
A similar result to Corollary \ref{c1q0} is true on homogeneous manifolds $G/K$, although in this case, in view of  \eqref{EQ:clft1} and \eqref{EQ:clft2}, the symbol $a(\xi)$ may be assumed to be zero outside the first $k_\xi\times k_\xi$ block, i.e. we would take
$a(\xi)=0$ for $\xi\not\in\Ghz$, and
$a(\xi)_{ij}=0$ for all $\xi\in\Ghz$ and for $i>k_\xi$ or $j>k_\xi$.

\smallskip
The multiplier theorem in Corollary \ref{c1q0} complements other known multiplier theorems on compact Lie groups, e.g. the Mihlin multiplier theorem \cite{Ruzhansky-Wirth:Lp-FAA,Ruzhansky-Wirth:Lp-Z}, H\"ormander multiplier theorem \cite{Fischer-Lp}, or more general results for the boundedness of pseudo-differential operators in $L^p$-spaces \cite{DR-Jussieu}.

\smallskip
In Section \ref{SEC:dini-li} we discuss analogues of the obtained results for the Dini-Lipschitz classes. 
The Dini-Lipschitz classes are a logarithmic extension on Lipschitz classes and these spaces have been analysed in different settings, e.g. in low dimensions \cite{Younis-DL}, or using Helgason transforms on rank 1 symmetric spaces in \cite{Daher-DL-Helg,FBE-DL}.
Our analysis in this part of the paper is based on a logarithmic extension of the Duren lemma allowing inclusion of the log-terms in the characterisations.


\section{Preliminaries}  
\label{SEC:Prelim}

In this section we recall some basic facts on the Fourier analysis and  the notion of global matrix-symbols for pseudo-differential operators on compact Lie groups and more generally on compact homogeneous manifolds. We refer to  \cite{rt:book} and \cite{rt:groups} for a comprehensive account of such topics.  

Since the proof of our main theorems will be reduced to the case of compact Lie groups as it will be explained at the end of this section, it will be convenient for our purposes to start with the setting of compact Lie groups.
 Given a compact Lie group $G$, we equip it with the normalised Haar measure $\mu\equiv dx$ on the Borel $\sigma$-algebra associated to the topology of 
the smooth manifold $G$. The Lie algebra of $G$ will be denoted by $\mathfrak{g}$. We also denote by $\widehat{G}$ the set of equivalence classes of continuous irreducible unitary 
representations of $G$ and by $\Rep(G)$ the set of all such representations. Since $G$ is compact, the set $\widehat{G}$ is discrete.  
For $[\xi]\in \widehat{G}$, by choosing a basis in the representation space of $\xi$, we can view 
$\xi$ as a matrix-valued function $\xi:G\rightarrow \ce^{d_{\xi}\times d_{\xi}}$, where 
$d_{\xi}$ is the dimension of the representation space of $\xi$. 
By the Peter-Weyl theorem the collection
$$
\left\{ \sqrt{d_\xi}\,\xi_{ij}: \; 1\leq i,j\leq d_\xi,\; [\xi]\in\Gh \right\}
$$
is an orthonormal basis of $L^2(G)$.
If $f\in L^1(G)$ we define its global Fourier transform at $\xi$ by 
\begin{equation}\label{EQ:FG}
\mathcal F_G f(\xi)\equiv \widehat{f}(\xi):=\int_{G}f(x)\xi(x)^*dx.
\end{equation}
Thus, if $\xi$ is a matrix representation, 
we have $\widehat{f}(\xi)\in\ce^{d_{\xi}\times d_{\xi}} $. The Fourier inversion formula is a consequence
 of the Peter-Weyl theorem:
\beq \label{EQ:FGsum}
f(x)=\sum\limits_{[\xi]\in \widehat{G}}d_{\xi} \Tr(\xi(x)\widehat{f}(\xi)).
\eq
Given a sequence of matrices $a(\xi)\in\mathbb C^{d_\xi\times d_\xi}$, we can define
\begin{equation}\label{EQ:FGi}
(\mathcal F_G^{-1} a)(x):=\sum\limits_{[\xi]\in \widehat{G}}d_{\xi} \Tr(\xi(x) a(\xi)),
\end{equation}
 where the series can be interpreted in a distributional sense or absolutely depending on the growth of 
(the Hilbert-Schmidt norms of) $a(\xi)$. For a further discussion we refer the reader to \cite{rt:book}.

For each $[\xi]\in \widehat{G}$, the matrix elements of $\xi$ are the eigenfunctions for the Laplacian $\mathcal{L}_G$ 
(or the Casimir element of the universal enveloping algebra), with the same eigenvalue which we denote by 
$-\lambda^2_{[\xi]}$, so that
\begin{equation}\label{EQ:Lap-lambda}
-\mathcal{L}_G\xi_{ij}(x)=\lambda^2_{[\xi]}\xi_{ij}(x)\qquad\textrm{ for all } 1\leq i,j\leq d_{\xi}.
\end{equation} 
The weight for measuring the decay or growth of Fourier coefficients in this setting is 
\begin{equation}\label{EQ:jpxi}
\jp{\xi}:=(1+\lambda^2_{[\xi]})^{\half},
\end{equation} 
 the eigenvalues of the elliptic first-order pseudo-differential operator 
$(I-\mathcal{L}_G)^{\half}$.
The Parseval identity takes the form 
\begin{equation}\label{EQ:Parseval}
\|f\|_{L^2(G)}= \left(\sum\limits_{[\xi]\in \widehat{G}}d_{\xi}\|\widehat{f}(\xi)\|^2_{\HS}\right)^{1/2},\quad
\textrm{ where }
\|\widehat{f}(\xi)\|^2_{\HS}=\Tr(\widehat{f}(\xi)\widehat{f}(\xi)^*),
\end{equation}
which gives the norm on 
$\ell^2(\widehat{G})$. 

For a linear continuous operator $A$ from $C^{\infty}(G)$ to $\mathcal{D}'(G) $ 
we define  its {\em matrix-valued symbol} $\sigma(x,\xi)\in\cdxi$ by 
\begin{equation}\label{EQ:A-symbol}
\sigma(x,\xi):=\xi(x)^*(A\xi)(x)\in\cdxi,
\end{equation}
where $A\xi(x)\in \cdxi$ is understood as $(A\xi(x))_{ij}=(A\xi_{ij})(x)$, i.e. by 
applying $A$ to each component of the matrix $\xi(x)$.
Then one has (\cite{rt:book}, \cite{rt:groups}) the global quantization
\begin{equation}\label{EQ:A-quant}
Af(x)=\sum\limits_{[\xi]\in \widehat{G}}d_{\xi}\Tr(\xi(x)\sigma(x,\xi)\widehat{f}(\xi))\equiv\sigma(X,D)f(x),
\end{equation}
in the sense of distributions, and the sum is independent of the choice of a representation $\xi$ from each 
equivalence class 
$[\xi]\in \widehat{G}$. If $A$ is a linear continuous operator from $C^{\infty}(G)$ to $C^{\infty}(G)$,
the series \eqref{EQ:A-quant} is absolutely convergent and can be interpreted in the pointwise 
sense. The symbol $\sigma$ can be interpreted as a matrix-valued
function on $G\times\widehat{G}$.
We refer to \cite{rt:book}, \cite{rt:groups} for the consistent development of this quantization
and the corresponding symbolic calculus. If the operator $A$ is left-invariant then its symbol
$\sigma$ does not depend on $x$. We often simply call such operators invariant.

In the sequel we will also need the Taylor expansion of functions on compact Lie groups.
For $f\in C^\infty(G)$, we have
\begin{equation}\label{EQ:TE}
f(x)=\sum\limits_{|\alpha|\leq N-1} D^{(\alpha)}f(e)q_{\alpha}(x)+O(|x|^N),
\end{equation}
for some invariant differential operators $D^{(\alpha)}$ of order $|\alpha|$, for an admissible 
family of functions $q_{\alpha}$, with $|x|$ denoting the geodesic distance from $x$ to $e$, see
\cite[Section 10.6]{rt:book}.


\medskip
We note that Definition \ref{lip2a} in the case of compact Lie groups can be stated in the following form by taking the geodesic distance on $G$ from the unit element $e$ to $h\in G$:

\begin{defn}\label{lip2} Let $G$ be a compact Lie group. Let $0<\alpha\leq 1$ and $1\leq p\leq\infty$. We define the space $\Lip_G(\alpha ;p)$ by
\[\Lip_G(\alpha;p):=\{f\in L^p(G):\|f(h\cdot)-f(\cdot)\|_{L^p(G)}=\Ob(|h|^{\alpha }) \mbox{ as }|h|\rightarrow 0\},\]
that is, 
\[\int_G|f(hx)-f(x)|^pdx=\Ob(|h|^{\alpha p}) \;\textrm{ as }\; |h|\rightarrow 0,\]
for $1\leq p<\infty$, with a natural modification for $p=\infty$.
\end{defn}

The space $\Lip_G(\alpha;p)$ endowed with the norm 
\[\|f\|_{\Lip_G(\alpha;p)}:=\sup\limits_{|h|\neq 0}|h|^{-\alpha}\|f(h\cdot)-f(\cdot)\|_{L^p(G)}\]
becomes a Banach space for all $1\leq p\leq\infty$.

For the analysis of the Fourier transform we will require the following  function spaces on the unitary dual $\Gh$. We refer the reader to \cite[Section 10.3.3]{rt:book} for the basic properties of such spaces.
\begin{defn} \label{DEF:lpG}
For $0< p<\infty$, we will write $\ell^p(\Gh)$ for the space of all $H=H(\xi)\in\ce^{d_\xi\times d_\xi}$ such that
\[\|H\|_{\ell^p(\Gh)}:=\left(\sum\limits_{[\xi]\in\Ghz}d_{\xi}^{p(\frac{2}{p}-\frac{1}{2})}\|H(\xi)\|_{\HS}^p\right)^{\frac 1p}<\infty.\]
If $1\leq p<\infty$ the quantity  $\|H\|_{\ell^p(\Gh)}$ defines a norm and $\ell^p(\Gh)$ endowed with it becomes a Banach space. If $0<p<1$ we can associate a Fr\'echet metric and the associated space becomes a complete metric space.
\end{defn}

We record some asymptotic properties on a compact Lie group $G$ of dimension $n$, that will be of use on several occasions: asymptotically as $\lambda\to\infty$  we have
\begin{equation}\label{EQ:dims1}
\sum_{\jp{\xi}\leq\lambda}d_{\xi}^{2} \jp{\xi}^{r n}\asymp \lambda^{(r+1)n}
\; \textrm{ for }\; r>-1,
\end{equation}
and
\begin{equation}\label{EQ:dims2}
\sum_{\jp{\xi}\geq\lambda}d_{\xi}^{2} \jp{\xi}^{r n}\asymp \lambda^{(r+1)n}\; \textrm{ for }\; r<-1.
\end{equation}
These properties follow from the Weyl spectral asymptotic formula for the elliptic pseudo-differential operator $I-\mathcal L_G$, and we can refer to \cite{ar:pq1} for their proof.
We also note the convergence criterion
\begin{equation}\label{EQ:conv}
\sum_{\jp{\xi}\leq\lambda}d_{\xi}^{2} \jp{\xi}^{-s}<\infty\;\textrm{ if and only if }\; s>n,
\end{equation}
see \cite{dr:gevrey} for the proof.

\begin{rem}\label{equiv1}
The notions of left and right-Lipschitz in \eqref{EQ:ll1} and \eqref{EQ:ll2} are related in the case of compact Lie groups in the following way.  We write $i(x)=x^{-1}$ and define $f^{\cdot}(x)=f\circ i (x)=f(x^{-1})$. We note that 
\[f(xh)-f(x)=f^{\cdot}(h^{-1}x^{-1})-f^{\cdot}(x^{-1})\]
and since $|h|\simeq |h^{-1}|$ we have obtained that $f$ is a right-Lipschitz of type $(\alpha;p)$ function if and only if $f^{\cdot}$ is left-Lipschitz of type $(\alpha;p)$.


Our statements will be stated for left-Lipschitz $(\alpha;p)$. A look at the arguments in the proofs of our results shows that all the statements also hold for right-Lipschitz $(\alpha;p)$ type. At the level of compact Lie groups both concepts agree when $p=2$ as a consequence of Theorem \ref{Ti2}: indeed, the criterion \eqref{t2mg} is the same for both left- or right-Lipschitz functions.
\end{rem}

We shall now explain how the proof of Theorems A$'$ and B$'$ can be reduced to the case of compact Lie groups. For a function $f\in C^\infty(G/K)$, its canonical lifting $\widetilde{f}$ is defined by $\widetilde{f}(yk)=f(y)$ for all $k\in K$, so that $\widetilde{f}$ is constant on the right cosets. 

\begin{lem} \label{ewrt1} 
We have $\widetilde{f}\in \Lip_{G/K}(\alpha; p)$ if and only if $f\in \Lip_{G/K}(\alpha; p)$.
\end{lem}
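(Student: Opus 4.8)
The plan is to deduce the equivalence from a single exact identity between the $L^p(G)$-norm of the difference of the lift $\widetilde f$ and the $L^p(G/K)$-norm of the difference of $f$; once this identity is in hand, the two Lipschitz conditions become literally the same asymptotic statement in the parameter $h$. Throughout, $\widetilde f$ is regarded as a function on $G$, so that the relevant Lipschitz space for the lift is $\Lip_G(\alpha;p)$, and the content to be proved is that $\widetilde f\in\Lip_G(\alpha;p)$ if and only if $f\in\Lip_{G/K}(\alpha;p)$.

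First I would record the right-$K$-invariance built into the lift: by definition $\widetilde f(xk)=\widetilde f(x)$ for all $x\in G$ and $k\in K$, i.e. $\widetilde f=f\circ\pi$ for the projection $\pi\colon G\to G/K$. Since left translation by $h\in G$ commutes with the right action of $K$, the map $x\mapsto \widetilde f(hx)-\widetilde f(x)$ is again right-$K$-invariant, because $\widetilde f(hxk)-\widetilde f(xk)=\widetilde f(hx)-\widetilde f(x)$. Consequently this function descends to $G/K$, where under $\pi$ it is exactly $f(h\cdot\bar x)-f(\bar x)$, with $\bar x=xK$ and $h\cdot\bar x$ the induced left action of $h$ on $G/K$. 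In particular the nonnegative function $x\mapsto |\widetilde f(hx)-\widetilde f(x)|^p$ is right-$K$-invariant as well.

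The key step is then the quotient (Weyl) integration formula. With the Haar measures on $G$ and $K$ and the $G$-invariant measure on $G/K$ all normalised to total mass one, one has $\int_G F(x)\,dx=\int_{G/K}\int_K F(xk)\,dk\,d\bar x$ for every $F\in L^1(G)$. Applying this to $F(x)=|\widetilde f(hx)-\widetilde f(x)|^p$ and using right-$K$-invariance, so that the inner integral over $K$ is constant in $k$ and contributes the factor $\int_K dk=1$, yields the exact identity
\[
\|\widetilde f(h\cdot)-\widetilde f(\cdot)\|_{L^p(G)}^p=\|f(h\cdot)-f(\cdot)\|_{L^p(G/K)}^p
\]
for every $h\in G$, with the obvious modification for $p=\infty$.

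Finally I would read off the equivalence: since the two sides above coincide for each fixed $h$, the bound $\Ob(|h|^{\alpha p})$ holds for one side precisely when it holds for the other, and letting $|h|\to0$ gives $\widetilde f\in\Lip_G(\alpha;p)$ if and only if $f\in\Lip_{G/K}(\alpha;p)$. The step requiring the most care is the bookkeeping of the distance $|h|$: the left shifts in both definitions are by the same group element $h$, and I would measure $|h|$ by the geodesic distance near the identity consistently on both sides, using that $\pi$ is a Riemannian submersion so that the geodesic distances from $e$ and from $eK$ agree along the horizontal directions parametrising a neighbourhood of the identity. This is the only point where one must verify that the ``$|h|\to0$'' asymptotics on $G$ and on $G/K$ define the same $\Ob$-class; the measure-theoretic identity itself is exact and carries no loss.
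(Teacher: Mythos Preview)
Your proof is correct and follows essentially the same approach as the paper: both establish the exact identity $\int_G|\widetilde f(hx)-\widetilde f(x)|^p\,dx=\int_{G/K}|f(hy)-f(y)|^p\,dy$ via the quotient integration formula and the right-$K$-invariance of the lift, and then read off the equivalence. Your additional remarks on the Riemannian-submersion compatibility of $|h|$ are not needed in the paper's setup (the shift parameter $h$ lives in $G$ in both definitions, so the same $|h|$ appears on each side), but they do no harm.
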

\begin{proof}  A simple calculation shows
\begin{align*} \int_G|\widetilde{f}(hx)-\widetilde{f}(x)|^pdx &=\int_{G/K}\left(\int_{K}|f(hyk)-f(yk)|^pdk\right)dy\\
&=\int_{G/K}|f(hy)-f(y)|^pdy,
\end{align*}
with equality, 
if the Haar measure is normalised in such a way that the measure of $K$ is equal to one.
\end{proof}

The Fourier coefficients of canonical liftings satisfy 
\begin{equation}\label{EQ:clft1}
\widehat{\widetilde{f}}(\xi)=0 \;\textrm{ for all }\; \xi\not\in\Ghz,
\end{equation}
and 
\begin{equation}\label{EQ:clft2}
\widehat{\widetilde{f}}(\xi)_{ij}=0 \;\textrm{ for all }\; \xi\in\Ghz,\; i>k_\xi,
\end{equation}
see e.g. Vilenkin \cite{Vilenkin:BK-eng-1968}.
We refer to \cite{dr:gevrey} or \cite{NRT-Pisa} for more details on the Fourier analysis on compact homogeneous manifolds.

\section{Titchmarsh theorems for Fourier transforms of Lipschitz functions}  
\label{SEC:ftlip}

In this section we prove our results. We start with a lemma on Fourier transforms 
that we will apply in the proof of the extension of Theorem A. The notion of global symbol discussed in Section \ref{SEC:Prelim} will be useful in the proof of the first main result. Henceforth $G$ will denote a compact Lie group of dimension $n$.

\begin{lem}\label{le1w} Let $H:\Gh\rightarrow\bigcup\limits_{d\in\ene}\ce^{d\times d}$ be such that $H(\xi)\in \ce^{d_{\xi}\times d_{\xi}}$ for each $\xi$. Let $1\leq\beta_0<\infty$. Then 
\[\langle\xi\rangle H(\xi)\in \ell^{\beta_0}(\Gh) \implies H\in \ell^{\beta}(\Gh),\]
for all $\frac{n\beta_0}{\beta_0+n}<\beta<\infty$.
\end{lem}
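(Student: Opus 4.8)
The plan is to reduce everything to the definition of the $\ell^\beta(\Gh)$-norm and then apply H\"older's inequality together with the convergence criterion \eqref{EQ:conv}. Writing $a_\xi:=\|H(\xi)\|_{\HS}$ and recalling from Definition \ref{DEF:lpG} that $\|H\|_{\ell^\beta(\Gh)}^\beta=\sum_{[\xi]\in\Gh}d_\xi^{2-\beta/2}a_\xi^\beta$, the hypothesis $\jp{\xi}H\in\ell^{\beta_0}(\Gh)$ reads $\sum_{[\xi]}d_\xi^{2-\beta_0/2}\jp{\xi}^{\beta_0}a_\xi^{\beta_0}<\infty$, while the goal is to show $\sum_{[\xi]}d_\xi^{2-\beta/2}a_\xi^\beta<\infty$ for every $\beta$ in the stated range. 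I would split the argument according to whether $\beta<\beta_0$ or $\beta\geq\beta_0$.

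For the main case $\frac{n\beta_0}{\beta_0+n}<\beta<\beta_0$ I would factor the summand as $a_\xi^\beta=\jp{\xi}^{-\beta}(\jp{\xi}a_\xi)^\beta$ and apply H\"older's inequality to $\sum_{[\xi]}d_\xi^{2-\beta/2}\jp{\xi}^{-\beta}(\jp{\xi}a_\xi)^\beta$ with conjugate exponents $p=\beta_0/\beta>1$ and $p'=\beta_0/(\beta_0-\beta)$, splitting each term as a product whose first factor, raised to the power $p$, reproduces $d_\xi^{2-\beta_0/2}(\jp{\xi}a_\xi)^{\beta_0}$. A direct bookkeeping of the exponents of $d_\xi$ then shows that the second factor, raised to the power $p'$, equals $d_\xi^{2}\jp{\xi}^{-s}$ with $s=\frac{\beta\beta_0}{\beta_0-\beta}$. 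Consequently
\[
\sum_{[\xi]}d_\xi^{2-\beta/2}a_\xi^\beta\leq\Big(\sum_{[\xi]}d_\xi^{2-\beta_0/2}\jp{\xi}^{\beta_0}a_\xi^{\beta_0}\Big)^{\beta/\beta_0}\Big(\sum_{[\xi]}d_\xi^{2}\jp{\xi}^{-s}\Big)^{(\beta_0-\beta)/\beta_0},
\]
the first bracket being finite by hypothesis. For the second bracket, \eqref{EQ:conv} gives finiteness precisely when $s>n$, and a one-line computation shows that $\frac{\beta\beta_0}{\beta_0-\beta}>n$ is equivalent to $\beta>\frac{n\beta_0}{\beta_0+n}$ --- exactly the lower bound in the statement. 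This is the step I expect to be the crux: the entire threshold $\frac{n\beta_0}{\beta_0+n}$ is forced by the spectral exponent $n$ in \eqref{EQ:conv}, so the H\"older exponents must be arranged so that the leftover weight is precisely $d_\xi^{2}\jp{\xi}^{-s}$.

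For the remaining case $\beta_0\leq\beta<\infty$ the weight $\jp{\xi}$ is not even needed: since $\jp{\xi}\geq 1$ one has $\|H\|_{\ell^\beta(\Gh)}\leq\|\jp{\xi}H\|_{\ell^\beta(\Gh)}$, and the nesting $\ell^{\beta_0}(\Gh)\subset\ell^{\beta}(\Gh)$ for $\beta_0\leq\beta$ gives $\jp{\xi}H\in\ell^{\beta_0}(\Gh)\subset\ell^\beta(\Gh)$, whence $H\in\ell^\beta(\Gh)$. The nesting itself follows, after absorbing the $d_\xi$-weight into $b_\xi:=d_\xi^{2/\beta_0-1/2}a_\xi$, from the elementary scalar embedding $\ell^{\beta_0}\subset\ell^\beta$ together with the bound $d_\xi^{2(1-\beta/\beta_0)}\leq 1$, which is valid because the exponent is nonpositive when $\beta\geq\beta_0$ and $d_\xi\geq 1$. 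Combining the two cases then covers the full range $\frac{n\beta_0}{\beta_0+n}<\beta<\infty$.
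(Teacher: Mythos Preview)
Your proof is correct and follows essentially the same route as the paper's: both handle $\beta\geq\beta_0$ via $\jp{\xi}\geq 1$ together with the nesting $\ell^{\beta_0}(\Gh)\subset\ell^{\beta}(\Gh)$, and for $\beta<\beta_0$ both apply H\"older's inequality with the conjugate pair $\beta_0/\beta$, $\beta_0/(\beta_0-\beta)$ so that the leftover factor becomes exactly $\sum_{[\xi]}d_\xi^{2}\jp{\xi}^{-\beta\beta_0/(\beta_0-\beta)}$, finite by \eqref{EQ:conv} precisely when $\beta>\frac{n\beta_0}{\beta_0+n}$. Your write-up is in fact slightly more explicit than the paper's in justifying the nesting step, but the argument is the same.
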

\begin{proof} 
Since $\langle\xi\rangle\geq 1$, we have $\langle\xi\rangle H(\xi)\in\ell^{\beta_0}(\Gh)$ implies that $H\in\ell^{\beta_0}(\Gh)$, and hence   $H\in\ell^{\beta}(\Gh)$
 for all $\beta\geq\beta_0$. So we need to prove the case $\beta<\beta_0$, which we now assume.
 
 \medskip
 For $\beta<\beta_0$ we can write
\begin{align*}\|H\|_{\ell^{\beta}(\Gh)}^{\beta}=\sum\limits_{[\xi]\in\Gh}d_{\xi}^{2}\left(\frac{\|H(\xi)\|_{\HS}}{\sqrt{d_{\xi}}}\right)^{\beta}&=\sum\limits_{[\xi]\in\Gh}d_{\xi}^{2}\langle\xi\rangle^{-\beta}\left(\frac{\langle\xi\rangle\|H(\xi)\|_{\HS}}{\sqrt{d_{\xi}}}\right)^{\beta}\\
&=\sum\limits_{[\xi]\in\Gh}d_{\xi}^{2\frac{\beta}{\beta_0}}\left(\frac{\langle\xi\rangle\|H(\xi)\|_{\HS}}{\sqrt{d_{\xi}}}\right)^{\beta}d_{\xi}^{2(1-\frac{\beta}{\beta_0})}\langle\xi\rangle^{-\beta}\\
&=\sum\limits_{[\xi]\in\Gh}a_{\xi}b_{\xi},
\end{align*}
where $a_{\xi}=d_{\xi}^{2\frac{\beta}{\beta_0}}\left(\frac{\langle\xi\rangle\|H(\xi)\|_{\HS}}{\sqrt{d_{\xi}}}\right)^{\beta}$ and $b_{\xi}=d_{\xi}^{2(1-\frac{\beta}{\beta_0})}\langle\xi\rangle^{-\beta}.$
By \eqref{EQ:conv} and the H\"older inequality applied to the last sum for the indices $\frac{\beta_0}{\beta}, \frac{\beta_0}{\beta_0-\beta}$ we obtain
\begin{align*}\sum\limits_{[\xi]\in\Gh}d_{\xi}^{2}\left(\frac{\|H(\xi)\|_{\HS}}{\sqrt{d_{\xi}}}\right)^{\beta}&\leq\left( \sum\limits_{[\xi]\in\Gh}d_{\xi}^{2}\left(\frac{\langle\xi\rangle\|H(\xi)\|_{\HS}}{\sqrt{d_{\xi}}}\right)^{\beta_0}\right)^{\frac{\beta}{\beta_0}}\times
\\
&\quad\times \left( \sum\limits_{[\xi]\in\Gh}d_{\xi}^{2(1-\frac{\beta}{\beta_0})\frac{\beta_0}{\beta_0-\beta}}\langle\xi\rangle^{-\beta(\frac{\beta_0}{\beta_0-\beta})}\right)^{(1-\frac{\beta}{\beta_0})}\\
&=\|\langle\cdot\rangle H(\cdot)\|_{\ell^{\beta_0}(\Gh)}^{\beta}\left( \sum\limits_{[\xi]\in\Gh}d_{\xi}^2\langle\xi\rangle^{-\beta(\frac{\beta_0}{\beta_0-\beta})}\right)^{(1-\frac{\beta}{\beta_0})}\\
&\leq C\left( \sum\limits_{[\xi]\in\Gh}d_{\xi}^2\langle\xi\rangle^{-\beta(\frac{\beta_0}{\beta_0-\beta})}\right)^{(1-\frac{\beta}{\beta_0})}<\infty ,
\end{align*}
provided that $\frac{\beta\beta_0}{\beta_0-\beta}>\dim G=n.$ For the convergence of the series in the last inequality we have used \eqref{EQ:conv}.
 To finish the proof we just note that the condition $\frac{n\beta_0}{\beta_0+n}<\beta$ is equivalent to $\frac{\beta\beta_0}{\beta_0-\beta}>n.$
\end{proof}

We now state our first main result which extends Theorem A to general compact Lie groups and consequently, in view of Lemma \ref{ewrt1}, to compact homogeneous manifolds.

\begin{thm}\label{Ti1} 
Let $G$ be a compact Lie group of dimension $n$.
Let $0<\alpha\leq 1$, $1<p\leq 2$, and let $q$ be such that $\sdual$. 
Let $f\in \Lip_G(\alpha;p)$.
Then 
$$\reallywidehat{(I-\lap)^{\half}f}\in \ell^{\beta}(\Gh)\,\textrm{ for } \;\frac{n}{\alpha+n-\frac{n}{p}-1}\leq \beta\leq q.$$
Consequently, 
$$\whf\in \ell^{\gamma}(\Gh)\;\textrm{ for }\;\frac{np}{\alpha p+np-n}<\gamma\leq q.$$ 
\end{thm}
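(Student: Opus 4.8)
The plan is to reduce everything to a single dyadic frequency estimate and then sum it against the correct weights. I first record how left translation acts on the Fourier side: substituting $y=hx$ in \eqref{EQ:FG} and using that $\xi$ is unitary gives $\widehat{f(h\cdot)}(\xi)=\widehat f(\xi)\xi(h)$, so that $\widehat{f(h\cdot)-f(\cdot)}(\xi)=\widehat f(\xi)(\xi(h)-I)$. Applying the Hausdorff--Young inequality on $G$, namely $\|\widehat g\|_{\ell^q(\Gh)}\le\|g\|_{L^p(G)}$ for $1<p\le 2$, to $g=f(h\cdot)-f(\cdot)$ and invoking the hypothesis $f\in\Lip_G(\alpha;p)$, I obtain for every small $h$ with $|h|=t$
\[
\sum_{[\xi]\in\Gh}d_\xi^{\,2-q/2}\,\bigl\|\widehat f(\xi)(\xi(h)-I)\bigr\|_{\HS}^{q}\le C\,t^{\alpha q}.
\]

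The key step, and the one I expect to be the main obstacle, is to trade the translation factor $\xi(h)-I$ for the spectral weight $\langle\xi\rangle$. I would average the last inequality over $h$ in the geodesic ball $B_t=\{|h|\le t\}$ and establish the uniform operator lower bound
\[
M_t(\xi):=\frac1{|B_t|}\int_{B_t}(\xi(h)-I)(\xi(h)-I)^*\,dh\ \ge\ c\,I
\]
whenever $t\,\langle\xi\rangle$ is comparable to a fixed small constant. Since $M_t(\xi)=2I-A_t-A_t^*$ with $A_t=|B_t|^{-1}\int_{B_t}\xi(h)\,dh$, testing against a unit vector $v$ gives $\langle M_t(\xi)v,v\rangle=|B_t|^{-1}\int_{B_t}\|\xi(h)v-v\|^2\,dh$. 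Writing $h=\exp Y$ and using the Taylor expansion $\xi(\exp Y)=I+d\xi(Y)+\tfrac12 d\xi(Y)^2+\cdots$, the skew-adjointness of $d\xi(Y)$, and the Casimir identity $-\sum_j d\xi(X_j)^2=\lambda^2_{[\xi]}I$ behind \eqref{EQ:Lap-lambda}, the leading contribution of this average is a fixed dimensional multiple of $t^2\lambda^2_{[\xi]}$; choosing $t=\varepsilon\langle\xi\rangle^{-1}$ with $\varepsilon$ small keeps the cubic remainder controlled, because $\|d\xi(Y)\|_{op}\lesssim|Y|\langle\xi\rangle$, and yields $M_t(\xi)\ge cI$ with $c=c(\varepsilon,n)>0$. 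Raising to the power $q/2\ge 1$ by Jensen's inequality passes this bound from squares to $q$-th powers, and then grouping frequencies into dyadic shells $\Lambda_m=\{2^m\le\langle\xi\rangle<2^{m+1}\}$ with $t\sim 2^{-m}$ produces the central estimate
\[
\Sigma_m:=\sum_{[\xi]\in\Lambda_m}d_\xi^{\,2-q/2}\,\|\widehat f(\xi)\|_{\HS}^{q}\ \lesssim\ 2^{-m\alpha q}.
\]

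It remains to sum over $m$. For the weighted conclusion I expand $\|\langle\cdot\rangle\widehat f\|_{\ell^\beta(\Gh)}^\beta$ over the shells, so the $m$-th block carries the factor $2^{m\beta}$ times $T_m=\sum_{\Lambda_m}d_\xi^{2-\beta/2}\|\widehat f(\xi)\|_{\HS}^\beta$. For $\beta\le q$ I apply H\"older inside the block with exponents $q/\beta$ and $q/(q-\beta)$, writing each summand as $\bigl(d_\xi^{2-q/2}\|\widehat f(\xi)\|_{\HS}^q\bigr)^{\beta/q}$ times a power of $d_\xi$; the bookkeeping collapses (the surviving $d_\xi$-exponent is exactly $2$) and, by the Weyl asymptotics $\sum_{\Lambda_m}d_\xi^2\asymp 2^{mn}$ from \eqref{EQ:dims1}, yields $T_m\lesssim\Sigma_m^{\beta/q}2^{mn(1-\beta/q)}\lesssim 2^{-m\alpha\beta}2^{mn(1-\beta/q)}$. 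The block is thus bounded by $2^{m[\beta(1-\alpha)+n-n\beta/q]}$, whose exponent is negative precisely when $\beta>\frac{n}{\alpha+n-n/p-1}$ (using $1/q=1-1/p$), so that $\reallywidehat{(I-\lap)^{\half}f}=\langle\cdot\rangle\widehat f\in\ell^\beta(\Gh)$ on the asserted range; the extreme exponent $\beta=q$ is covered directly by Hausdorff--Young. Running the identical H\"older-and-Weyl computation on $\sum_m T_m'$ with $T_m'=\sum_{\Lambda_m}d_\xi^{2-\gamma/2}\|\widehat f(\xi)\|_{\HS}^\gamma$ (now without the weight $\langle\xi\rangle^\gamma$) gives convergence exactly for $\gamma>\frac{np}{\alpha p+np-n}$, which is the stated conclusion for $\whf$; equivalently this last step is Lemma \ref{le1w}. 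The finitely many low frequencies contribute a bounded amount and are harmless, and by Lemma \ref{ewrt1} the corresponding statement on $G/K$ (Theorem A$'$) follows at once.
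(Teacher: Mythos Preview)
Your plan follows the same skeleton as the paper's proof: compute $\widehat{f_h}(\xi)=\widehat f(\xi)\xi(h)$, apply Hausdorff--Young, trade $\xi(h)-I$ for the weight $\langle\xi\rangle$, then combine H\"older with the Weyl asymptotics \eqref{EQ:dims1}, and finish via Lemma~\ref{le1w}. The genuine differences are in how the middle two steps are organised.

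\textbf{The lower bound.} The paper fixes a single $h$ and, via the Taylor expansion \eqref{EQ:TE}, diagonalises the self--adjoint matrices $D^{(\alpha)}\xi(e)D^{(\beta)}\xi(e)^*+D^{(\beta)}\xi(e)D^{(\alpha)}\xi(e)^*$ to obtain an elliptic second--order symbol, arriving at the \emph{pointwise} estimate \eqref{main1}. Your ball--averaged operator $M_t(\xi)$ together with the Casimir identity is a clean alternative; it has the advantage that the remainder control $\|d\xi(Y)\|_{op}\lesssim |Y|\langle\xi\rangle$ is explicit, whereas the paper's $O(|h|^3)$ remainder implicitly carries a $\langle\xi\rangle$--dependence that is absorbed by restricting to $\langle\xi\rangle\le 1/|h|$. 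Both routes yield the same shell estimate $\Sigma_m\lesssim 2^{-m\alpha q}$, equivalently \eqref{main2}.

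\textbf{The summation and the endpoint.} Here the two arguments diverge in a way that costs you the closed endpoint $\beta=\frac{n}{\alpha+n-n/p-1}$. You apply H\"older \emph{inside each dyadic shell} and then sum, obtaining a geometric series with ratio $2^{\beta(1-\alpha)+n(1-\beta/q)}$; at the endpoint this ratio equals $1$ and the sum diverges. The paper instead applies H\"older \emph{once} to the full partial sum $\Phi(N)=\sum_{\langle\xi\rangle\le N}d_\xi^{2}(\langle\xi\rangle\|\widehat f(\xi)\|_{\HS}/\sqrt{d_\xi})^{\beta}$, bounding it directly by $O(N^{(1-\alpha)\beta+n(1-\beta/q)})$ via \eqref{main2} and \eqref{EQ:dims1}; at the endpoint this is $O(1)$ uniformly in $N$, so the full series converges. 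Your shell--by--shell H\"older throws away exactly the cancellation that the global H\"older captures.

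\textbf{A slip.} Your sentence ``the extreme exponent $\beta=q$ is covered directly by Hausdorff--Young'' is not right for the \emph{first} conclusion: Hausdorff--Young gives $\widehat f\in\ell^q(\Gh)$, not $\langle\cdot\rangle\widehat f\in\ell^q(\Gh)$, which would require $(I-\mathcal L_G)^{1/2}f\in L^p$. The upper bound $\beta\le q$ in the statement is simply the range where the H\"older step (exponents $q/\beta$, $q/(q-\beta)$) is available, and the case $\beta=q$ needs no separate treatment.

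For the second conclusion your direct computation is fine and is exactly Lemma~\ref{le1w}; the paper also invokes that lemma.
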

\begin{proof} For $f\in L^p(G)$ we will denote by $f_h$ the function $f_h(x)=f(hx)$.
 We first note that
\begin{align*} \widehat{f_h}(\xi)&=\int_G f(hx)\xi(x)^*dx\\
&=\int_G f(hx)\xi(hx)^*\xi(h)dx\\
&=\widehat{f}(\xi)\xi(h).
\end{align*}
Hence the Fourier transform of $f_h-f$ is given by
 \[\efee(f_h-f)(\xi)=\whf(\xi)(\xi(h)-I_{d_{\xi}}),\]
 where $ I_{d_{\xi}}$ is the identity matrix in $\ce^{d_\xi\times d_\xi}$.
By the Hausdorff-Young inequality we get
\beq\|\whf(\xi)(\xi(h)-I_{d_{\xi}})\|_{\ell^q(\Gh)}^q\leq \|f_h-f\|_{L^p(G)}^q=O(|h|^{\alpha q}).\label{hy1}\eq
We now estimate from below $\|\whf(\xi)(\xi(h)-I_{d_{\xi}})\|_{\ell^q(\Gh)}^q$. To do so we start with the terms $\|\whf(\xi)(\xi(h)-I_{d_{\xi}})\|_{\HS}^q$ for every $\xi$ according to the definition of the norm $\|\cdot\|_{\ell^q(\Gh)}$.\\

We observe that 
\begin{align}\|\whf(\xi)(\xi(h)-I_{d_{\xi}})\|_{\HS}^2 &=\Tr\left(\whf(\xi)(\xi(h)-I_{d_{\xi}})(\xi(h)^*-I_{d_{\xi}})\whf(\xi)^*\right)\nonumber\\
&=\Tr\left((\xi(h)-I_{d_{\xi}})(\xi(h)^*-I_{d_{\xi}})\whf(\xi)^*\whf(\xi)\right).\label{iqf1x}
\end{align}
We now apply Taylor expansions of type \eqref{EQ:TE} to the terms $(\xi(h)-I_{d_{\xi}}),\,(\xi(h)^*-I_{d_{\xi}})$. For each entry in the matrices they are of the form
\[\xi(h)_{ij}-\delta_{ij}=\sum\limits_{|\alpha|=1}D^{(\alpha)}\xi_{ij}(e)q_{\alpha}(h)+O(|h|^2),\]
where $q_{\alpha}\in C^{\infty}(G)$  vanishes at $e$ of order $1$ and $e$ is its isolated zero, i.e., there exist constants $C_1, C_2>0$ such that
\[C_1|y|\leq |q_{\alpha}(y)|\leq C_2|y|.\] 

In the matrix form we can write 
\beq\xi(h)-I_{d_{\xi}}=\sum\limits_{|\alpha|=1}D^{(\alpha)}\xi(e)q_{\alpha}(h)+O(|h|^2).
\label{migtl}\eq
To estimate \eqref{iqf1x} we plug \eqref{migtl} into it. Then
\begin{multline*}
\Tr\left((\xi(h)-I_{d_{\xi}})(\xi(h)^*-I_{d_{\xi}})\whf(\xi)^*\whf(\xi)\right)\\
=\Tr\left(\sum\limits_{|\alpha|=1, |\beta|=1}D^{(\alpha)}\xi(e)D^{(\beta)}\xi(e)^*\whf(\xi)^*\whf(\xi)q_{\alpha}(h) q_{\beta}(h)\right)+O(|h|^3).
\end{multline*}
We observe that for each pair $\alpha,\beta$ the matrix  $D^{(\alpha)}\xi(e)D^{(\beta)}\xi(e)^*+D^{(\beta)}\xi(e)D^{(\alpha)}\xi(e)^*$ entering this sum, is self-adjoint for every $\xi$, thus it can be written in the form
\[U^{-1}\Lambda_{\alpha,\beta,\xi}U=D^{(\alpha)}\xi(e)D^{(\beta)}\xi(e)^*+D^{(\beta)}\xi(e)D^{(\alpha)}\xi(e)^*,\]
where $\Lambda_{\alpha,\beta,\xi}$ is diagonal and $U$ is unitary. Hence 
\[\Lambda_{\alpha,\beta,\xi}=U(\sigma_{D^{(\alpha)}}\sigma^*_{D^{(\beta)}}+\sigma_{D^{(\beta)}}\sigma_{D^{(\alpha)}}^*)U^{-1},\]
and we can deduce that  $\sum\limits_{|\alpha|=1,|\beta|=1}\Lambda_{\alpha,\beta,\xi}$ is a diagonal positive second order elliptic symbol since $D^{(\alpha)}$'s can be chosen to give a basis of the Lie algebra of $G$, see \cite[(10.25)]{rt:book}. The ellipticity is preserved by the action of an unitary matrix, due to the identities
\[\|B\|_{op}=\|BV\|_{op}=\|WB\|_{op},\]
where $B,V,W\in C^{d_{\xi}\times d_{\xi}}$ with $V,W$ unitary.

\medskip
Therefore, grouping terms with respect to the pair $(\alpha,\beta)$ and $(\beta,\alpha)$, we have
\begin{align*}
&\Tr\left(\sum\limits_{|\alpha|=1, |\beta|=1}D^{(\alpha)}\xi(e)D^{(\beta)}\xi(e)^*\whf(\xi)^*\whf(\xi)q_{\alpha}(h) q_{\beta}(h)\right) \\
&=\Tr\left(\sum\limits_{|\alpha|=1,|\beta|=1}\Lambda_{\alpha,\beta,\xi}[U\whf(\xi)^*][U\whf(\xi)^*]^* q_{\alpha}(h)q_{\beta}(h)\right)\\
&=\sum\limits_{i=1}^{d_{\xi}}\left(\sum\limits_{|\alpha|=1,|\beta|=1}(\Lambda_{\alpha,\beta,\xi})_{ii}([U\whf(\xi)^*][U\whf(\xi)^*]^*)_{ii}q_{\alpha}(h)q_{\beta}(h)\right)\\
&\geq C|h|^2\jpb^2\sum\limits_{i=1}^{d_{\xi}}\left(([U\whf(\xi)^*][U\whf(\xi)^*]^*)_{ii}\right)\\
&= C|h|^2\jpb^2 \Tr\left(([U\whf(\xi)^*][U\whf(\xi)^*]^*)\right)\\
&= C|h|^2\jpb^2\|\whf(\xi)\|_{\HS}^2.
\end{align*}

We have proved that
\beq\|\whf(\xi)(\xi(h)-I_{d_{\xi}})\|_{\HS}^2\geq C|h|^2\langle\xi\rangle^2\|\whf(\xi)\|_{\HS}^2
\label{main1}\eq
for small enough $|h|$.
By \eqref{hy1}  we have
$$
\sum\limits_{[\xi]\in\Gh, \langle\xi\rangle\leq \frac{1}{|h|}}d_{\xi}^{q(\frac{2}{q}-\frac{1}{2})}\|\widehat{f_h}(\xi)-\whf(\xi)\|_{\HS}^q=O(|h|^{\alpha q}),
$$
and hence by \eqref{main1} we obtain
\begin{multline*}
|h|^{q}\sum\limits_{[\xi]\in\Gh, \langle\xi\rangle\leq \frac{1}{|h|}}d_{\xi}^{q(\frac{2}{q}-\frac{1}{2})}\langle\xi\rangle^q\|\whf(\xi)\|_{\HS}^q \\
\lesssim \sum\limits_{[\xi]\in\Gh, \langle\xi\rangle\leq \frac{1}{|h|}}d_{\xi}^{q(\frac{2}{q}-\frac{1}{2})}\|\widehat{f_h}(\xi)-\whf(\xi)\|_{\HS}^q\leq 
O(|h|^{\alpha q}).
\end{multline*} 
Therefore, we obtain
\beq
\sum\limits_{[\xi]\in\Gh, \langle\xi\rangle\leq \frac{1}{|h|}}d_{\xi}^{q(\frac{2}{q}-\frac{1}{2})}\langle\xi\rangle^q\|\whf(\xi)\|_{\HS}^q
=O(|h|^{(\alpha-1)q}).
\label{main2}
\eq

Now, for $\beta\leq q$ and $N\in\ene$ we write 
$$
\Phi(N)=\sum\limits_{[\xi]\in\Gh, \langle\xi\rangle\leq N}d_{\xi}^{\beta(\frac{2}{\beta}-\frac{1}{2})}\langle\xi\rangle^{\beta}\|\whf(\xi)\|_{\HS}^{\beta},
$$
so that the analysis for $|h|\rightarrow 0$ will be translated to $N\rightarrow \infty$.  In order to apply the H\"older inequality and for the forthcoming analysis it is convenient to write $\Phi(N)$ in the form
\[\Phi(N)=\sum\limits_{[\xi]\in\Gh, \langle\xi\rangle\leq N}d_{\xi}^{2}\left(\frac{\langle\xi\rangle\|\whf(\xi)\|_{\HS}}{\sqrt{d_{\xi}}}\right)^{\beta}=\sum\limits_{[\xi]\in\Gh, \langle\xi\rangle\leq N}a_{\xi}b_{\xi},\]
where $a_{\xi}=d_{\xi}^{2(\frac{\beta}{q})}\left(\frac{\langle\xi\rangle\|\whf(\xi)\|_{\HS}}{\sqrt{d_{\xi}}}\right)^{\beta}$ and $b_{\xi}=d_{\xi}^{2(1-\frac{\beta}{q})}$.\\

Then, by H\"older inequality, \eqref{main2} and \eqref{EQ:dims1} with $r=0$, we get
\begin{align*}
\Phi(N)&\leq \left(\sum\limits_{[\xi]\in\Gh, \langle\xi\rangle\leq N}d_{\xi}^{2}\left(\frac{\langle\xi\rangle\|\whf(\xi)\|_{\HS}}{\sqrt{d_{\xi}}}\right)^q\right)^{\frac{\beta}{q}}\left(\sum\limits_{[\xi]\in\Gh, \langle\xi\rangle\leq  N}d_{\xi}^2\right)^{1-\frac{\beta}{q}}\\
&=O(N^{(1-\alpha)\beta}) N^{n(1-\frac{\beta}{q})}.\\
\end{align*}
Therefore  
\beq\sum\limits_{[\xi]\in\Gh, \langle\xi\rangle\leq N}d_{\xi}^{2}\left(\frac{\langle\xi\rangle\|\whf(\xi)\|_{\HS}}{\sqrt{d_{\xi}}}\right)^{\beta}=O(N^{(1-\alpha)\beta+n(1-\frac{\beta}{q})}).\label{miqj3}\eq

Since $\reallywidehat{(I-\lap)^{\half}f}(\xi)=\langle\xi\rangle\whf (\xi)$, we have proved that $\reallywidehat{(I-\lap)^{\half}f}\in \ell ^{\beta}(\Gh)$ provided that $(1-\alpha)\beta+n(1-\frac{\beta}{q})\leq 0$, a condition which is equivalent to $\beta\geq\frac{n}{\alpha+n-\frac{n}{p}-1}$ as we have assumed. This proves the first conclusion.

\medskip
Now, an application of Lemma \ref{le1w} to $H(\xi)=\whf (\xi)$ shows that $\whf\in\ell^{\gamma}(\Gh)$ for $\gamma >\frac{n\beta_0}{n+\beta_0}$ with 
$\beta_0=\frac{n}{\alpha+n-\frac{n}{p}-1}$. Since 
\[\frac{n\left(\frac{n}{\alpha+n-\frac{n}{p}-1}\right)}{\frac{n}{\alpha+n-\frac{n}{p}-1}+n}=\frac{np}{\alpha p+np-n}\]
we conclude the proof.
\end{proof}

\begin{rem}\label{REM:circle}
In the case $G=\T$ we observe that Theorem \ref{Ti1} for the first conclusion takes the form:
\[f\in \Lip_{\T}(\alpha;p)\implies \reallywidehat{(I-\Delta)^{\half}f}\in \ell^{\beta}(\Z)\]
for $\frac{p}{p\alpha-1}<\beta$.

\medskip
For the second conclusion:
\[f\in \Lip_{\T}(\alpha;p)\implies \whf\in \ell^{\gamma}(\Z)\]
for $\frac{p}{\alpha p+p-1}<\gamma\leq q$.

\medskip
Consequently Theorem \ref{Ti1} with the second conclusion is an extension of Titchmarsh theorem (cf. Titchmarsh \cite[Theorem 84]{tich:b1}) and Younis \cite[Theorem 2.6]{myou:lpli}) to the setting of compact Lie groups. As it was already mentioned in the introduction the lower bound $\frac{p}{\alpha p+p-1}$ in Theorem \ref{Ti1} is in general sharp as a counterexample can be constructed for $p=2$ in the case $G=\T$.
\end{rem}
For the proof of the second Titchmarsh theorem we will be using the following lemma due to Duren (cf. \cite[p. 101]{du:b1}):

\begin{lem}\label{dulem}
Suppose $c_k\geq 0$ and $0<b<a$. Then
\[\sum\limits_{k=1}^Nk^ac_k=\Ob(N^b)  \mbox{ as }N\rightarrow\infty \]
if and only if
\[\sum\limits_{k=N}^{\infty}c_k=\Ob(N^{b-a})  \mbox{ as }N\rightarrow\infty .
\]
\end{lem}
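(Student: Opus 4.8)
The plan is to prove both implications by Abel summation (summation by parts), using the hypothesis $0<b<a$ to guarantee both the convergence of the auxiliary series that appear and the decay of the boundary terms. Throughout I would write $S_N=\sum_{k=1}^N k^a c_k$ for the partial sums on the left-hand side and $R_N=\sum_{k=N}^{\infty}c_k$ for the tails on the right-hand side. Note first that, since $b-a<0$, the condition $R_N=\Ob(N^{b-a})$ in particular forces $\sum_k c_k<\infty$, so that all tails $R_N$ are well defined and the manipulations below are legitimate; the positivity $c_k\geq 0$ guarantees the tails are monotone.

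For the forward direction, assume $S_N=\Ob(N^b)$. Setting $d_k=k^a c_k$, so that $c_k=k^{-a}d_k$ and $S_N=\sum_{k=1}^N d_k$, I would write $R_N=\sum_{k=N}^{\infty}k^{-a}d_k$ and sum by parts, pairing the factor $k^{-a}$ against the increments $d_k=S_k-S_{k-1}$. Truncating at $M$ and letting $M\to\infty$ this yields
\[R_N=-N^{-a}S_{N-1}+\sum_{k=N}^{\infty}\bigl(k^{-a}-(k+1)^{-a}\bigr)S_k,\]
where the boundary term at infinity, $M^{-a}S_M=\Ob(M^{b-a})\to 0$, vanishes precisely because $b<a$. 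The first term is $\Ob(N^{b-a})$ directly. For the series I would use the mean value estimate $k^{-a}-(k+1)^{-a}\leq C\,k^{-a-1}$ together with $S_k=\Ob(k^b)$ to bound it by $C\sum_{k=N}^{\infty}k^{b-a-1}$; since $b-a-1<-1$ this converges, and by comparison with $\int_N^{\infty}t^{b-a-1}\,dt$ it is $\Ob(N^{b-a})$, as required.

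For the converse, assume $R_N=\Ob(N^{b-a})$. Now I would express $c_k=R_k-R_{k+1}$ and sum $S_N=\sum_{k=1}^N k^a(R_k-R_{k+1})$ by parts, which produces
\[S_N=R_1+\sum_{k=2}^{N}\bigl(k^a-(k-1)^a\bigr)R_k-N^a R_{N+1}.\]
Here $R_1=\Ob(1)$, the boundary term satisfies $N^a R_{N+1}=\Ob\bigl(N^a\cdot N^{b-a}\bigr)=\Ob(N^b)$, and the main sum is controlled using $k^a-(k-1)^a\leq C\,k^{a-1}$ and $R_k=\Ob(k^{b-a})$ by $C\sum_{k=2}^N k^{b-1}=\Ob(N^b)$, this last estimate coming from comparison with $\int_1^N t^{b-1}\,dt$ and using $b>0$. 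Together these give $S_N=\Ob(N^b)$.

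The computations are entirely routine; the only delicate point is the bookkeeping in the two summation-by-parts identities and the verification that the boundary terms decay rather than blow up. This is exactly where the strict inequalities in $0<b<a$ enter: $b<a$ both annihilates the infinite boundary term and makes $\sum_k k^{b-a-1}$ convergent in the forward direction, while $b>0$ makes $\sum_{k\le N} k^{b-1}$ grow like $N^b$ in the converse. I expect no essential obstacle beyond keeping the index shifts and signs straight in the Abel formulas.
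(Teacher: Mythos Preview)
Your proof is correct and follows essentially the same route as the paper. The paper does not prove Lemma~\ref{dulem} itself (it is quoted from Duren), but its proof of the logarithmic extension, Lemma~\ref{dulemlog}, proceeds exactly as you do: Abel summation with $S_N=\sum_{k\le N}k^a c_k$ in the forward direction and with $R_N=\sum_{k\ge N}c_k$ in the converse, together with the mean-value bounds $k^{-a}-(k+1)^{-a}\lesssim k^{-a-1}$ and $k^a-(k-1)^a\lesssim k^{a-1}$ and integral comparison for the resulting power sums.
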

An extension of this lemma will be proved later as Lemma \ref{dulemlog}.

We can now state our second  main result which extends Theorem B to compact Lie groups and consequently, in view of Lemma \ref{ewrt1}, to compact homogeneous manifolds (Theorem B$'$). 

\begin{thm}\label{Ti2}  Let $0<\alpha\leq 1$ and $f\in L^2(G)$. Then 
$f\in \Lip_{G}(\alpha;2)$ if and only if 
\beq
\sum\limits_{[\xi]\in\Gh, \langle\xi\rangle\geq N}d_{\xi}\|\widehat{f}(\xi)\|_{\HS}^2=\Ob(N^{-2\alpha}) \mbox{ as } N\rightarrow\infty.
\label{t2mg}\eq
\end{thm}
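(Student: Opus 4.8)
The plan is to use the Parseval identity to convert everything into a statement about weighted tails of $\|\widehat f(\xi)\|_{\HS}$, and then to pass between the ``low-frequency partial sum'' and the ``high-frequency remainder'' by means of Duren's Lemma \ref{dulem}. Writing $f_h(x)=f(hx)$ and recalling from the proof of Theorem \ref{Ti1} that $\widehat{f_h}(\xi)-\widehat f(\xi)=\widehat f(\xi)(\xi(h)-I_{d_\xi})$, the Plancherel identity \eqref{EQ:Parseval} gives
\[
\|f_h-f\|_{L^2(G)}^2=\sum_{[\xi]\in\Gh}d_\xi\|\widehat f(\xi)(\xi(h)-I_{d_\xi})\|_{\HS}^2 .
\]
The heart of the matter is a two-sided control of each summand, split at the threshold $\langle\xi\rangle\sim 1/|h|$. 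In the low-frequency range $\langle\xi\rangle\lesssim 1/|h|$ the operator-norm Taylor estimate coming from \eqref{migtl}, namely $\|\xi(h)-I_{d_\xi}\|_{op}\lesssim |h|\langle\xi\rangle$, yields the upper bound $\|\widehat f(\xi)(\xi(h)-I_{d_\xi})\|_{\HS}^2\lesssim |h|^2\langle\xi\rangle^2\|\widehat f(\xi)\|_{\HS}^2$, while the lower bound \eqref{main1} established in the proof of Theorem \ref{Ti1} provides the matching reverse inequality in the same range. In the high-frequency range $\langle\xi\rangle\gtrsim 1/|h|$ I would simply use unitarity of $\xi(h)$, i.e.\ $\|\xi(h)-I_{d_\xi}\|_{op}\le 2$, to get $\|\widehat f(\xi)(\xi(h)-I_{d_\xi})\|_{\HS}^2\le 4\|\widehat f(\xi)\|_{\HS}^2$.

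For the necessity direction I would assume $f\in\Lip_G(\alpha;2)$ and retain only the low-frequency part of the Parseval sum: by the lower bound,
\[
O(|h|^{2\alpha})=\|f_h-f\|_{L^2(G)}^2\ \gtrsim\ |h|^2\sum_{\langle\xi\rangle\le 1/|h|} d_\xi\langle\xi\rangle^2\|\widehat f(\xi)\|_{\HS}^2,
\]
so that, setting $N\sim 1/|h|$, one obtains $\sum_{\langle\xi\rangle\le N}d_\xi\langle\xi\rangle^2\|\widehat f(\xi)\|_{\HS}^2=O(N^{2-2\alpha})$. Applying Duren's Lemma \ref{dulem} with $a=2$ and $b=2-2\alpha$ then converts this partial-sum bound into the desired remainder estimate \eqref{t2mg}.

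For the sufficiency direction I would start from \eqref{t2mg} and run Duren's Lemma in the reverse direction to recover $\sum_{\langle\xi\rangle\le N}d_\xi\langle\xi\rangle^2\|\widehat f(\xi)\|_{\HS}^2=O(N^{2-2\alpha})$. Splitting the Parseval sum at $\langle\xi\rangle=1/|h|$ and inserting the two upper bounds above,
\[
\|f_h-f\|_{L^2(G)}^2\ \lesssim\ |h|^2\sum_{\langle\xi\rangle\le 1/|h|} d_\xi\langle\xi\rangle^2\|\widehat f(\xi)\|_{\HS}^2+\sum_{\langle\xi\rangle> 1/|h|} d_\xi\|\widehat f(\xi)\|_{\HS}^2=O(|h|^{2\alpha}),
\]
which is exactly the $\Lip_G(\alpha;2)$ condition.

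The technical obstacle is that Lemma \ref{dulem} is phrased for an integer-indexed sequence $c_k$, whereas here the spectrum consists of the values $\langle\xi\rangle$, which are neither integers nor simply indexed. I would bridge this by grouping representations into integer shells, $c_k:=\sum_{k\le\langle\xi\rangle<k+1}d_\xi\|\widehat f(\xi)\|_{\HS}^2$, and using $\langle\xi\rangle\asymp k$ on the $k$-th shell to see that $\sum_{\langle\xi\rangle\le N}d_\xi\langle\xi\rangle^2\|\widehat f(\xi)\|_{\HS}^2\asymp\sum_{k\le N}k^2c_k$ and $\sum_{\langle\xi\rangle\ge N}d_\xi\|\widehat f(\xi)\|_{\HS}^2\asymp\sum_{k\ge N}c_k$, after which Lemma \ref{dulem} applies verbatim. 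The genuinely delicate point is the endpoint $\alpha=1$: there $b=2-2\alpha=0$ falls outside the range $0<b<a$ required by Lemma \ref{dulem}, and the transfer between partial sum and remainder degrades by a logarithmic factor. I expect this endpoint to require a separate argument (or an additional hypothesis), since already on $\T$ the condition $\sum_{|j|\ge N}|\widehat f(j)|^2=O(N^{-2})$ does not by itself force $f\in\Lip_{\T}(1;2)$.
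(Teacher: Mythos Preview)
Your strategy coincides with the paper's: Plancherel, the two-sided control of $\|\widehat f(\xi)(\xi(h)-I_{d_\xi})\|_{\HS}^2$ coming from the proof of Theorem~\ref{Ti1} (the lower bound \eqref{main1} for necessity, the matching upper bound for sufficiency), and Duren's Lemma~\ref{dulem} to pass between partial sums and tails. The differences are minor. For the reindexing needed to feed Duren, the paper enumerates eigenvalues via the Weyl law $\lambda_k\asymp k^{1/n}$ and works with exponents $a=2/n$, $b=2(1-\alpha)/n$ at scale $N_0=N^n$; your integer-shell grouping $c_k=\sum_{k\le\langle\xi\rangle<k+1}d_\xi\|\widehat f(\xi)\|_{\HS}^2$ with $a=2$, $b=2-2\alpha$ is equivalent and arguably more transparent. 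For the sufficiency direction the paper relies only on the upper bound \eqref{main1rf} together with a terse reverse application of Duren; your explicit split at $\langle\xi\rangle=1/|h|$ with the trivial unitarity bound $\|\xi(h)-I_{d_\xi}\|_{op}\le 2$ on the high-frequency block is cleaner and sidesteps any worry about summing $d_\xi\langle\xi\rangle^2\|\widehat f(\xi)\|_{\HS}^2$ over the whole dual.

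Your reservation about $\alpha=1$ is well taken and is not addressed in the paper either. The necessity direction survives at the endpoint by a direct argument not using Duren: if the partial sums $\sum_{\langle\xi\rangle\le N}d_\xi\langle\xi\rangle^2\|\widehat f(\xi)\|_{\HS}^2$ stay bounded then the full series converges, and hence
\[
\sum_{\langle\xi\rangle\ge N}d_\xi\|\widehat f(\xi)\|_{\HS}^2\le N^{-2}\sum_{\langle\xi\rangle\ge N}d_\xi\langle\xi\rangle^2\|\widehat f(\xi)\|_{\HS}^2=O(N^{-2}).
\]
The sufficiency direction, however, genuinely fails at $\alpha=1$, exactly as you suspect: already on $G=\T$ the choice $\widehat f(j)=|j|^{-3/2}$ gives $\sum_{|j|\ge N}|\widehat f(j)|^2=O(N^{-2})$ while $\|f_h-f\|_{L^2(\T)}^2\asymp h^2\log(1/|h|)$, so $f\notin\Lip_{\T}(1;2)$. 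Thus your flag is not a technicality but an actual gap in the stated range $0<\alpha\le 1$; the argument (yours and the paper's) is complete only for $0<\alpha<1$.
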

\begin{proof} We first assume that $f\in \Lip_{G}(\alpha;2)$. By \eqref{main2} applied to the case $p=q=2$ we obtain
\beq \sum\limits_{[\xi]\in\Gh, \langle\xi\rangle\leq N}d_{\xi}\langle\xi\rangle^2\|\whf(\xi)\|_{\HS}^2=\Ob(N^{2(1-\alpha)}).\label{t2mg2}\eq

Keeping the notations from the proof of Theorem \ref{Ti1} for $\Phi(N)$ with $\beta=2$ we write
\[\Phi(N)=\sum\limits_{[\xi]\in\Gh, \langle\xi\rangle\leq N}d_{\xi}^{2}\langle\xi\rangle^2\left(\frac{\|\whf(\xi)\|_{\HS}}{\sqrt{d_{\xi}}}\right)^2=\sum\limits_{[\xi]\in\Gh, \langle\xi\rangle\leq N}\sum\limits_{i,j=1}^{d_{\xi}}\langle\xi\rangle^2\left(\frac{\|\whf(\xi)\|_{\HS}}{\sqrt{d_{\xi}}}\right)^2.\]
We put $b_{\xi}=\left(\frac{\|\whf(\xi)\|_{\HS}}{\sqrt{d_{\xi}}}\right)^2$ and in order  to estimate the sum $\sum\limits_{[\xi]\in\Gh, \langle\xi\rangle\leq N}\sum\limits_{i,j=1}^{d_{\xi}}\langle\xi\rangle^2b_{\xi}$ we consider the set $W=\{\jp{\xi}:[\xi]\in\Gh\}$ which is nothing but the range of eigenvalues of the operator $(I-\lap)^{\half}$ since by definition $\jp{\xi}:=(1+\lambda^2_{[\xi]})^{\half}$ where $\lambda^2_{[\xi]}$ are the eigenvalues of $-\lap$. Since $W$ is discrete countable  we can enumerate it as $W=\{\lambda_k:k\in\ene\}$. Of course this enumeration also induces an enumeration of $\Gh$ and by the eigenvalue estimate $\lambda_k\approx k^{\frac 1n}$ (see e.g \cite{ar:pq1}) we obtain
\begin{align}
\sum\limits_{[\xi]\in\Gh, \langle\xi\rangle\leq N}\sum\limits_{i,j=1}^{d_{\xi}}\langle\xi\rangle^2b_{\xi}&=\sum\limits_{\lambda_k\leq N}\sum\limits_{i,j=1}^{d_k}\lambda_k^2b_k\nonumber\\
&\approx \sum\limits_{k\leq N^n}\sum\limits_{i,j=1}^{d_k}k^\frac{2}{n}b_k\nonumber\\
&= \sum\limits_{k\leq N^n}k^\frac{2}{n}c_k,\label{iqwin}
\end{align}
with $c_k=d_k^2b_k$. We now apply \eqref{t2mg2} to get 
\[\sum\limits_{k\leq N^n}k^\frac{2}{n}c_k=\sum\limits_{k=1}^{N_0}k^\frac{2}{n}c_k=\Ob(N_0^{\frac{2(1-\alpha)}{n}}),\]
where $N_0=N^n$.
An application of Duren's Lemma \ref{dulem} gives us
\[\sum\limits_{k=N_0}^{\infty}c_k=\Ob(N_0^{\frac{2(1-\alpha)}{n}-\frac{2}{n}})=\Ob(N_0^{-\frac{2\alpha}{n}}).\]

Hence
\[\sum\limits_{k\geq N^n}c_k=\sum\limits_{\lambda_k\geq N}d_k^2b_k=\Ob(N^{-2\alpha}).\]
Therefore
\
\[\sum\limits_{[\xi]\in\Gh, \langle\xi\rangle\geq N}d_{\xi}\|\widehat{f}(\xi)\|_{\HS}^2=\sum\limits_{[\xi]\in\Gh, \langle\xi\rangle\geq N}d_{\xi}^{2}\left(\frac{\|\whf(\xi)\|_{\HS}}{\sqrt{d_{\xi}}}\right)^2=\Ob(N^{-2\alpha}),\]
 which shows \eqref{t2mg}.
 
We now assume \eqref{t2mg}. By Plancherel theorem and with the notation for $f_h$ as left translation as in the proof of Theorem \ref{Ti1} we have
\beq\|\whf(\xi)\,(\xi(h)-I_{d_{\xi}})\|_{\ell^2(\Gh)}^2= \|f_h-f\|_{L^2(G)}^2.\label{hy1bb}\eq
Following the analysis of the proof of Theorem \ref{Ti1} we note that from the ellipticity argument used to obtain \eqref{main1}, we can now just use the fact that $\sum\limits_{|\alpha|=1,|\beta|=1}\Lambda_{\alpha,\beta,\xi}$ is of the second order. Then we get 
 \beq\|\whf (\xi)(\xi(h)-I_{d_{\xi}})\|_{\HS}^2\leq C|h|^2\langle\xi\rangle^2\|\whf(\xi)\|_{\HS}^2.\label{main1rf}\eq
Therefore the proof is now reduced to estimate
\[ \sum\limits_{[\xi]\in\Gh, \langle\xi\rangle\geq N}d_{\xi}\langle\xi\rangle^2\|\whf(\xi)\|_{\HS}^2.\]
But an analogous argument as in  the above proof of the  ``only if" part and using the same notations yield
\begin{multline*}
\|f_h-f\|_{L^2(G)}^2\leq |h|^2 \sum\limits_{[\xi]\in\Gh, \langle\xi\rangle\geq N}d_{\xi}\langle\xi\rangle^2\|\whf(\xi)\|_{\HS}^2 \\
\approx |h|^2 \sum\limits_{k=N}^{\infty}c_k=
|h|^2 O(N^{2(1-\alpha)})=
\Ob(|h|^{2\alpha}),
\end{multline*}
with $N=\frac{1}{|h|}$,
which concludes the proof.
\end{proof}

We now formulate an application of  Theorem \ref{Ti2} to the regularity of Fourier multipliers on
H\"older spaces mentioned in Corollary \ref{c1q0}. For convenience of the reader we repeat its statement:

\begin{cor}\label{c1q} 
Let $0\leq\gamma<1$ and let $a:\Gh\rightarrow\bigcup\limits_{d\in\ene}\ce^{d\times d}$ be such that $a(\xi)\in \ce^{d_{\xi}\times d_{\xi}}$ for each $\xi$ and
\[\|a(\xi)\|_{op}\leq C\jp{\xi}^{-\gamma}. \]
Let $A$ be the Fourier multiplier with symbol $a$, i.e.
$\widehat{Af}(\xi)=a(\xi)\widehat{f}(\xi)$ for all $\xi\in\Gh$. Then
\[A:\Lip_{G}(\alpha;2)\rightarrow\Lip_{G}(\alpha+\gamma;2) \]
is bounded, for all $\alpha$ such that $0<\alpha<1-\gamma$.
\end{cor}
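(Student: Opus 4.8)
The plan is to pass to the Fourier side and use the characterisation of $\Lip_G(\alpha;2)$ provided by Theorem \ref{Ti2}. Since $f\in\Lip_G(\alpha;2)$, the criterion \eqref{t2mg} gives
\[
S(N):=\sum_{[\xi]\in\Gh,\ \jp{\xi}\geq N} d_\xi\,\|\whf(\xi)\|_{\HS}^2=\Ob(N^{-2\alpha}),
\]
and to conclude that $Af\in\Lip_G(\alpha+\gamma;2)$ it suffices to verify the same criterion with $\alpha$ replaced by $\alpha+\gamma$ for $Af$; here the constraint $0<\alpha<1-\gamma$ guarantees $0<\alpha+\gamma<1$, so the target index is an admissible Lipschitz order. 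First I would record that $A$ maps $L^2(G)$ into itself: from $\widehat{Af}(\xi)=a(\xi)\whf(\xi)$ and the matrix inequality $\|a(\xi)\whf(\xi)\|_{\HS}\leq\|a(\xi)\|_{op}\|\whf(\xi)\|_{\HS}$ together with $\|a(\xi)\|_{op}\leq C\jp{\xi}^{-\gamma}\leq C$, the Plancherel identity \eqref{EQ:Parseval} yields $\|Af\|_{L^2(G)}\leq C\|f\|_{L^2(G)}$, so $Af\in L^2(G)$ and Theorem \ref{Ti2} applies to it.

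The heart of the argument is to transfer the decay of $S(N)$ through the extra weight $\jp{\xi}^{-2\gamma}$ produced by the symbol bound. Using $\|\widehat{Af}(\xi)\|_{\HS}\leq C\jp{\xi}^{-\gamma}\|\whf(\xi)\|_{\HS}$, the tail for $Af$ is dominated by $\sum_{\jp{\xi}\geq N} d_\xi\,\jp{\xi}^{-2\gamma}\|\whf(\xi)\|_{\HS}^2$. I would estimate this by a dyadic decomposition of the range $\jp{\xi}\geq N$ into shells $\{2^j N\leq\jp{\xi}<2^{j+1}N\}$, $j\geq 0$. On the $j$-th shell the weight is comparable to $(2^jN)^{-2\gamma}$, while the remaining sum over that shell is at most $S(2^jN)=\Ob((2^jN)^{-2\alpha})$. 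Summing the resulting geometric series $\sum_{j\geq0}(2^jN)^{-2(\alpha+\gamma)}$, which converges since $\alpha+\gamma>0$, I obtain
\[
\sum_{[\xi]\in\Gh,\ \jp{\xi}\geq N} d_\xi\,\jp{\xi}^{-2\gamma}\|\whf(\xi)\|_{\HS}^2=\Ob(N^{-2(\alpha+\gamma)}).
\]
Hence $\sum_{\jp{\xi}\geq N}d_\xi\|\widehat{Af}(\xi)\|_{\HS}^2=\Ob(N^{-2(\alpha+\gamma)})$, and the ``if'' part of Theorem \ref{Ti2} gives $Af\in\Lip_G(\alpha+\gamma;2)$.

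Finally, for the quantitative boundedness of $A$ I would keep track of the implicit constants: the $\Ob$-constant for the tail of $Af$ is $C^2$ times the $\Ob$-constant for $S(N)$ times the finite geometric factor $\sum_{j\geq0}2^{-2j(\alpha+\gamma)}$, and by the proof of Theorem \ref{Ti2} these tail constants are comparable to the squares of the corresponding $\Lip$-norms. This yields $\|Af\|_{\Lip_G(\alpha+\gamma;2)}\lesssim C\,\|f\|_{\Lip_G(\alpha;2)}$, i.e. the asserted boundedness. The only genuinely delicate point is the decay-transfer step: one must pass the tail bound through the weight $\jp{\xi}^{-2\gamma}$ without losing the exponent, and the dyadic summation (equivalently, an Abel summation by parts against $S(N)$) is exactly what makes the two contributions combine into the sharp rate $N^{-2(\alpha+\gamma)}$; the hypothesis $\gamma<1-\alpha$ enters only to keep $\alpha+\gamma$ within the admissible range $(0,1)$ of Lipschitz orders.
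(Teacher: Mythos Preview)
Your proof is correct and follows the same overall strategy as the paper: apply the ``only if'' direction of Theorem \ref{Ti2} to $f$, use the symbol bound to control the tail of $\widehat{Af}$, and then apply the ``if'' direction of Theorem \ref{Ti2} to conclude $Af\in\Lip_G(\alpha+\gamma;2)$.

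The one noteworthy difference is in the decay-transfer step. You set up a dyadic decomposition into shells $\{2^jN\leq\jp{\xi}<2^{j+1}N\}$ and sum a geometric series; the paper simply observes that on the whole range $\{\jp{\xi}\geq N\}$ one has the pointwise bound $\jp{\xi}^{-2\gamma}\leq N^{-2\gamma}$ (since $\gamma\geq 0$), so the weight can be pulled out in one line:
\[
\sum_{\jp{\xi}\geq N} d_\xi\,\jp{\xi}^{-2\gamma}\|\whf(\xi)\|_{\HS}^2
\leq N^{-2\gamma}\sum_{\jp{\xi}\geq N} d_\xi\,\|\whf(\xi)\|_{\HS}^2
=\Ob(N^{-2(\alpha+\gamma)}).
\]
Your dyadic argument is not wrong, and it would be the natural tool if the weight were not monotone or if one needed the $S(2^jN)$ information shell-by-shell, but here it is more machinery than required. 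Your added remark that $Af\in L^2(G)$ (needed to invoke Theorem \ref{Ti2}) is a small point the paper leaves implicit.
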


\begin{proof} Let $f\in \Lip_{G}(\alpha;2)$. Then by Theorem \ref{Ti2} we have
\begin{align*} \sum\limits_{[\xi]\in\Gh, \langle\xi\rangle\geq N}d_{\xi}\|\widehat{Af}(\xi)\|_{\HS}^2&=\sum\limits_{[\xi]\in\Gh, \langle\xi\rangle\geq N}d_{\xi}\|a(\xi)\widehat{f}(\xi)\|_{\HS}^2\\
&\leq\sum\limits_{[\xi]\in\Gh, \langle\xi\rangle\geq N}d_{\xi}\|a(\xi)\|_{op}^2\|\widehat{f}(\xi)\|_{\HS}^2\\
&\leq C\sum\limits_{[\xi]\in\Gh, \langle\xi\rangle\geq N}d_{\xi}\jp{\xi}^{-2\gamma}\|\widehat{f}(\xi)\|_{\HS}^2\\
&\leq CN^{-2\gamma}\sum\limits_{[\xi]\in\Gh, \langle\xi\rangle\geq N}d_{\xi}\|\widehat{f}(\xi)\|_{\HS}^2\\
&=\Ob(N^{-2(\alpha+\gamma)}) \mbox{ as } N\rightarrow\infty.
\end{align*}
Again by Theorem \ref{Ti2} this
implies that $A:\Lip_{G}(\alpha;2)\rightarrow\Lip_{G}(\alpha+\gamma;2) $
is bounded for all $\alpha >0$ such that $\alpha+\gamma<1$.
\end{proof}
As an example we will consider the Lipschitz-Sobolev regularity for Bessel potential operators on compact Lie groups. 
First we observe that if $A=(I-\lap)^{-\frac{\gamma}{2}}$ with $0\leq\gamma<1$, by Corollary  \ref{c1q} we have 
\[\|(I-\lap)^{-\frac{\gamma}{2}}f\|_{\Lip_G(\alpha+\gamma;2)}\leq C\|f\|_{\Lip_G(\alpha;2)}\]
 for all $\alpha$ such that $0<\alpha<1-\gamma$.
 
 \medskip
Hence
\beq\|f\|_{\Lip_G(\alpha+\gamma;2)}\leq C\|(I-\lap)^{\frac{\gamma}{2}}f\|_{\Lip_G(\alpha ;2)}.\label{1qop}
\eq

We can now introduce the Sobolev-Lipschitz space $H^{\gamma}\Lip_G(\alpha,2)$ for every $0\leq\gamma<1$ and $0<\alpha<1-\gamma$ by 
\[H^{\gamma}\Lip_G(\alpha,2):=\{f\in\mathcal{D}'(G):(I-\lap)^{\frac{\gamma}{2}}f\in \Lip_G(\alpha,2)\}.\]
From \eqref{1qop} we obtain:

\begin{cor}
For every $0\leq\gamma<1$ and $0<\alpha<1-\gamma$ we have the continuous embedding
\[H^{\gamma}\Lip_G(\alpha,2)\hookrightarrow \Lip_G(\alpha+\gamma,2) .\]
\end{cor}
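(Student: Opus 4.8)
The plan is to read off the embedding directly from the norm inequality \eqref{1qop}, which in turn rests on the multiplier estimate of Corollary \ref{c1q} applied to the Bessel potential $A=(I-\lap)^{-\gamma/2}$. The key observation is that once the space $H^{\gamma}\Lip_G(\alpha,2)$ is equipped with its natural norm
\[\|f\|_{H^{\gamma}\Lip_G(\alpha,2)}:=\|(I-\lap)^{\gamma/2}f\|_{\Lip_G(\alpha;2)},\]
the asserted continuous embedding is nothing but a reformulation of \eqref{1qop}; no further analytic input is required.

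Concretely, I would take $f\in H^{\gamma}\Lip_G(\alpha,2)$, so that by definition $g:=(I-\lap)^{\gamma/2}f\in\Lip_G(\alpha;2)$. Since $\jp{\xi}\geq 1$, the operators $(I-\lap)^{\pm\gamma/2}$ are Fourier multipliers with symbols $\jp{\xi}^{\pm\gamma}I_{d_\xi}$ and are mutually inverse, so that $f=(I-\lap)^{-\gamma/2}g$ is recovered from $g$. Applying \eqref{1qop} — equivalently, Corollary \ref{c1q} with the symbol $a(\xi)=\jp{\xi}^{-\gamma}I_{d_\xi}$, for which $\|a(\xi)\|_{op}=\jp{\xi}^{-\gamma}$ and the range $0<\alpha<1-\gamma$ is exactly the admissible one — would then yield
\[\|f\|_{\Lip_G(\alpha+\gamma;2)}\leq C\|(I-\lap)^{\gamma/2}f\|_{\Lip_G(\alpha;2)}=C\|f\|_{H^{\gamma}\Lip_G(\alpha,2)}.\]
In particular, the finiteness of the right-hand side forces $f$ to be a genuine element of $\Lip_G(\alpha+\gamma,2)$ rather than merely a distribution, and the displayed inequality is precisely the boundedness of the inclusion map.

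I expect essentially no obstacle to remain at this stage: all the substantive work — the lower Hausdorff--Young bound obtained from the Taylor expansion \eqref{EQ:TE} together with the ellipticity argument leading to \eqref{main1}, the Duren Lemma \ref{dulem}, and the resulting characterisation of $\Lip_G(\alpha;2)$ in Theorem \ref{Ti2} — has already been expended in establishing Corollary \ref{c1q} and hence \eqref{1qop}. The only minor point worth recording is the well-definedness noted above, namely that $(I-\lap)^{-\gamma/2}$ genuinely inverts $(I-\lap)^{\gamma/2}$ on $\mathcal{D}'(G)$; this is immediate from the spectral definition of these multipliers and the strict positivity $\jp{\xi}\geq 1$.
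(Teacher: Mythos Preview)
Your proposal is correct and follows exactly the paper's approach: the paper simply states ``From \eqref{1qop} we obtain'' the corollary, and your argument is a faithful (slightly more detailed) unpacking of that one-line deduction. The only addition you make---recording that $(I-\lap)^{-\gamma/2}$ inverts $(I-\lap)^{\gamma/2}$ on $\mathcal{D}'(G)$---is a harmless clarification the paper leaves implicit.
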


\section{Dini-Lipschitz functions}  
\label{SEC:dini-li}

So far we have being dealing with the H\"older-Lipschitz condition on compact homogeneous manifolds in the $L^p$ setting and a characterisation in the $L^2$ case.
 In this section we will consider a different condition, the so-called Dini-Lipschitz condition on $L^2$ and we will generalise the corresponding Titchmarsh theorems (cf. \cite[Theorem 85]{tich:b1}). 
 
We shall first establish an extension of Duren's lemma (cf. \cite[p. 101]{du:b1}), Lemma \ref{dulem} in this paper, adapted to the Dini-Lipschitz condition.
\begin{lem}\label{dulemlog}
Suppose $d\in\ar ,\, c_k\geq 0$ and $0<b<a$. Then
\[\sum\limits_{k=1}^Nk^ac_k=\Ob(N^b(\log N)^d) \;\mbox{ as }\;N\rightarrow\infty \]
if and only if
\[\sum\limits_{k=N}^{\infty}c_k=\Ob(N^{b-a}(\log N)^d) \; \mbox{ as }\;N\rightarrow\infty .
\]
\end{lem}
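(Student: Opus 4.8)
The plan is to prove the two implications by reducing both directions to the original Duren lemma (Lemma \ref{dulem}) via a summation-by-parts that isolates the logarithmic factor. The key observation is that the log-term varies slowly compared to the power $N^b$, so it can be treated as an ``almost constant'' weight; the honest way to exploit this is Abel summation, which converts the hypothesis on one sum into information about the other.

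\medskip
For the forward implication, I would assume $S(N):=\sum_{k=1}^N k^a c_k=\Ob(N^b(\log N)^d)$ and aim to bound the tail $\sum_{k=N}^\infty c_k$. First I would write $c_k = (k^a c_k)\cdot k^{-a}$ and apply Abel summation to the partial tail sums $\sum_{k=N}^M c_k$, using $S(N)$ as the accumulated quantity:
\[
\sum_{k=N}^M c_k = \sum_{k=N}^M \bigl(S(k)-S(k-1)\bigr)k^{-a}
= S(M)M^{-a}-S(N-1)(N-1)^{-a}+\sum_{k=N}^{M-1}S(k)\bigl(k^{-a}-(k+1)^{-a}\bigr).
\]
Since $b<a$, the boundary term $S(M)M^{-a}=\Ob(M^{b-a}(\log M)^d)\to 0$ as $M\to\infty$. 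For the remaining sum I would insert the bound $S(k)\lesssim k^b(\log k)^d$ together with $k^{-a}-(k+1)^{-a}\asymp k^{-a-1}$, reducing the estimate to $\sum_{k\geq N}k^{b-a-1}(\log k)^d$. The latter is a standard tail whose order is governed by the integral test: since $b-a-1<-1$, one gets $\sum_{k\geq N}k^{b-a-1}(\log k)^d=\Ob(N^{b-a}(\log N)^d)$, which is exactly the claimed tail bound. The boundary term $S(N-1)(N-1)^{-a}$ is also of the right order, so collecting everything gives the conclusion.

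\medskip
For the converse I would run the same machinery in reverse. Assuming $T(N):=\sum_{k=N}^\infty c_k=\Ob(N^{b-a}(\log N)^d)$, I would estimate $\sum_{k=1}^N k^a c_k$ by Abel summation against the tail sums $T(k)$, writing $c_k=T(k)-T(k+1)$ and summing by parts so that the weight $k^a$ picks up differences $(k+1)^a-k^a\asymp k^{a-1}$. This leads to $\sum_{k=1}^N k^a c_k \lesssim N^a T(N)+\sum_{k=1}^{N}k^{a-1}T(k)$, and substituting the tail bound turns the sum into $\sum_{k=1}^N k^{b-1}(\log k)^d$, whose partial sums are $\Ob(N^b(\log N)^d)$ (again by comparison with $\int^N t^{b-1}(\log t)^d\,dt$, valid since $b>0$). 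The boundary contribution $N^a T(N)=\Ob(N^b(\log N)^d)$ matches, completing this direction.

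\medskip
The step I expect to require the most care is the treatment of the logarithmic factor inside the slowly-varying tail and head integrals, that is, verifying $\sum_{k\geq N}k^{b-a-1}(\log k)^d=\Ob(N^{b-a}(\log N)^d)$ and its head-sum analogue uniformly in the sign of $d$ (recall $d\in\er$ is allowed to be negative). For $d\geq 0$ this follows cleanly from monotonicity and the integral test; for $d<0$ one must ensure the $(\log k)^d$ factor does not spoil the comparison, which is still fine because $\log k$ is eventually increasing and the dominant power $k^{b-a-1}$ (resp.\ $k^{b-1}$) is strictly summable (resp.\ has the correct partial-sum growth). A clean way to package this is to prove once and for all the elementary asymptotics $\sum_{k=N}^\infty k^{-1-\delta}(\log k)^d\asymp N^{-\delta}(\log N)^d$ for $\delta>0$ and $\sum_{k=1}^N k^{-1+\delta}(\log k)^d\asymp N^{\delta}(\log N)^d$ for $\delta>0$, and then invoke these with $\delta=a-b$ and $\delta=b$ respectively. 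Everything else is bookkeeping in the Abel summation already familiar from the classical $d=0$ case of Lemma \ref{dulem}.
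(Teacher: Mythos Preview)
Your proposal is correct and follows essentially the same approach as the paper: both directions are handled by Abel (partial) summation against $S(N)=\sum_{k\leq N}k^ac_k$ and $T(N)=\sum_{k\geq N}c_k$ respectively, reducing everything to the estimates $\sum_{k\geq N}k^{b-a-1}(\log k)^d=\Ob(N^{b-a}(\log N)^d)$ and $\sum_{k\leq N}k^{b-1}(\log k)^d=\Ob(N^b(\log N)^d)$. The only cosmetic difference is that the paper proves these integral-type estimates by explicit integration by parts on $\int t^{b-a-1}(\log t)^d\,dt$, distinguishing the cases $d\leq 0$ and $d>0$ (iterating until the exponent on the logarithm drops to $\leq 0$), whereas you package them as standalone asymptotics $\sum_{k\geq N}k^{-1-\delta}(\log k)^d\asymp N^{-\delta}(\log N)^d$ and $\sum_{k\leq N}k^{-1+\delta}(\log k)^d\asymp N^{\delta}(\log N)^d$; your opening line about ``reducing to Duren's lemma'' is a slight misstatement of what you actually do, but the execution is right.
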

\begin{proof} We first assume that $\sum\limits_{k=1}^Nk^ac_k=\Ob(N^b(\log N)^d) \mbox{ as }N\rightarrow\infty $, and write
 \beq S_N=\sum\limits_{k=1}^Nk^ac_k.\label{pardlog}\eq
By using the partial summation formula (cf. \cite[Theorem 3.41]{wru:b1}) and \eqref{pardlog} we have
\begin{align*}\sum\limits_{k=N}^{M}c_k=&\sum\limits_{k=N}^{M-1}S_k(k^{-a}-(k+1)^{-a})+S_MM^{-a}-S_{N-1}N^{-a}\\
\leq & C\sum\limits_{k=N}^{M-1}k^b(\log k)^{d}(k^{-a}-(k+1)^{-a}) +CM^{b-a}(\log M)^{d}\\
\leq & C\sum\limits_{k=N}^{M-1}k^{b-a-1}(\log k)^{d} +CM^{b-a}(\log M)^{d},
\end{align*}
where we have also used $k^{-a}-(k+1)^{-a}\leq ck^{-a-1}$ in the last inequality.\\

Letting $M\rightarrow\infty$ the conclusion is reduced to estimate the series $\sum\limits_{k=N}^{\infty}k^{b-a-1}(\log k)^{d}$. We observe that
an integration by parts argument shows that 
\begin{align}\int\limits_{N}^{\infty}t^{b-a-1}(\log t)^{d}dt=& \frac{1}{b-a}\int\limits_{N}^{\infty}(\log t)^{d} \frac{d}{dt}t^{b-a}dt\nonumber\\
=&\frac{t^{b-a}}{b-a}(\log t)^{d} \Bigr|_N^{\infty}-\frac{d}{b-a}\int\limits_{N}^{\infty}t^{b-a-1}(\log t)^{d-1}dt\nonumber\\
=&\frac{N^{b-a}}{a-b}(\log N)^{d}+\frac{d}{a-b}\int\limits_{N}^{\infty}t^{b-a-1}(\log t)^{d-1}dt.\label{ink1q}
\end{align}
We now distinguish two cases: $d\leq 0$ and  $d>0$. First, if $d\leq 0$ we obtain 
\begin{align*}\int\limits_{N}^{\infty}t^{b-a-1}(\log t)^{d}dt=&\frac{N^{b-a}}{a-b}(\log N)^{d}-C_{N,a,b,d}\\
=&O(N^{b-a}(\log N)^{d})  \mbox{ as }N\rightarrow\infty,
\end{align*}
where the number $C_{N,a,b,d}$ is non-negative since $\frac{d}{a-b}\leq 0$ and $t^{b-a-1}(\log t)^{-d-1}>0$. \\

If $d>0$, we take the least positive integer $\ell$ such that $d\leq \ell-1$. An inductive argument applying integration by parts to the integral on the right-hand side  of \eqref{ink1q} gives us 
\[\int\limits_{N}^{\infty}t^{b-a-1}(\log t)^{d}dt=O(N^{b-a}(\log N)^{d})+\frac{d(d-1)\cdots(d-(\ell-1))}{(a-b)^{\ell}}\int\limits_{N}^{\infty}t^{b-a-1}(\log t)^{d-\ell}dt.\]
Once more, the fact that $t^{b-a-1}(\log t)^{d-\ell}\geq 0$ and the choice of $\ell$ implies that
\[d(d-1)\cdots(d-(\ell-1))\leq 0,\]
 which concludes the proof of the ``only if part".\\

Conversely, we assume the second assertion 
 and write 
 \beq R_N=\sum\limits_{k=N}^{\infty}c_k.\label{pardlog2}\eq
Now, the partial summation formula gives
\begin{align*}\sum\limits_{k=1}^{N}k^ac_k=&\sum\limits_{k=1}^{N-1}R_k(k^{a}-(k+1)^a)+R_NN^a\\
\leq & C\sum\limits_{k=1}^{N-1}k^{b-a}(\log k)^{d}(k^{a}-(k+1)^{a}) +CN^{b-a}(\log(N))^{d} N^a\\
\leq & C\sum\limits_{k=1}^{N-1}k^{b-a}(\log k)^{d}k^{a-1} +CN^{b}(\log(N))^{d} \\
= & C\sum\limits_{k=1}^{N-1}k^{b-1}(\log k)^{d} +CN^{b}(\log(N))^{d}.
\end{align*}
The sum $\sum\limits_{k=1}^{N-1}k^{b-1}(\log k)^{d}$ can be estimated by integration in a similar way as we have obtained  \eqref{ink1q}. The inductive argument explained above gives 
\[\int\limits_{1}^{N-1}t^{b-1}(\log t)^{d}dt=O((N-1)^{b}(\log (N-1))^{d})+\frac{d(d-1)\cdots(d-(\ell-1))}{b^{\ell}}\int\limits_{1}^{N-1}t^{b-1}(\log t)^{d-\ell}dt.\]
By using this formula, one can deduce that
\[\sum\limits_{k=1}^{N}k^ac_k=\Ob(N^{b}(\log N)^{d})   \mbox{ as }N\rightarrow\infty .\]
The proof is complete.
\end{proof}
 
We can now establish our first theorem on Dini-Lipschitz functions.
\begin{thm}\label{dinlip0} Let $G/K$ be a compact homogeneous manifold. Let $\alpha\geq 0$ and $d\in\ar$. Then the conditions
\beq\|f(h\cdot)-f(\cdot)\|_{L^2(G/K)}=\Ob\left(|h|^{\alpha}\left(\log\frac{1}{|h|}\right)^{d}\right) \mbox{ as }|h|\rightarrow 0 
\label{dinlip1aa}\eq
and
\beq \sum\limits_{[\xi]\in\Ghz, \langle\xi\rangle\geq N}d_{\xi}\|\widehat{f}(\xi)\|_{\HS}^2=\Ob\left(N^{-2\alpha}(\log N)^{2d}\right) \mbox{ as }N\rightarrow \infty  
\label{dinlip1c0}\eq
are equivalent.
\begin{proof} As in the previous section it will be enough to prove our results for compact Lie groups. First we recall that by \eqref{hy1bb} we have 
\beq\|\whf(\xi)\,(\xi(h)-I_{d_{\xi}})\|_{\ell^2(\Gh)}^2= \|f_h-f\|_{L^2(G)}^2.\label{hy1bbc}\eq
Now if we assume \eqref{dinlip1aa} we obtain
\beq\|\whf(\xi)\,(\xi(h)-I_{d_{\xi}})\|_{\ell^2(\Gh)}^2=\Ob\left(|h|^{2\alpha}\left(\log\frac{1}{|h|}\right)^{2d}\right) \mbox{ as }|h|\rightarrow 0 .\label{h1bbc}\eq
A look at the deduction of \eqref{main2} and by \eqref{h1bbc} shows that for $p=q=2$:
\beq
|h|^2\sum\limits_{[\xi]\in\Gh, \langle\xi\rangle\leq \frac{1}{|h|}}d_{\xi}\langle\xi\rangle^2\|\whf(\xi)\|_{\HS}^2
=\Ob(|h|^{2\alpha}(\log(1/|h|))^{2d})  \mbox{ as }|h|\rightarrow 0 .
\label{main2sw}
\eq
Hence 
\[\sum\limits_{[\xi]\in\Gh, \langle\xi\rangle\leq \frac{1}{|h|}}d_{\xi}\langle\xi\rangle^2\|\whf(\xi)\|_{\HS}^2
=\Ob(|h|^{2\alpha-2}(\log|h|)^{2d})  \mbox{ as }|h|\rightarrow 0 .\]
We now write $N=1/h$. Then
\beq\label{estwf}\sum\limits_{[\xi]\in\Gh, \langle\xi\rangle\leq N}d_{\xi}\langle\xi\rangle^2\|\whf(\xi)\|_{\HS}^2
=\Ob(N^{2-2\alpha}(\log (N))^{2d})  \mbox{ as } N\rightarrow \infty .\eq
An analogous argument as in the proof of Theorem \ref{Ti2} to obtain \eqref{iqwin} and using the same notations lead us to
\[\sum\limits_{[\xi]\in\Gh, \langle\xi\rangle\leq N}d_{\xi}\langle\xi\rangle^2\|\whf(\xi)\|_{\HS}^2\leq\sum\limits_{k\leq N^n}k^\frac{2}{n}c_k=\sum\limits_{k=1}^{N_0}k^\frac{2}{n}c_k=\Ob(N_0^{\frac{2-2\alpha}{n}}(\log N_0^{\frac{1}{n}})^{2d}),\]
where $N_0=N^{\frac{1}{n}}$. An application of Lemma \ref{dulemlog} gives us 
\[\sum\limits_{k=N_0}^{\infty}c_k=\Ob(N_0^{\frac{2-2\alpha}{n}-\frac{2}{n}}(\log N_0^{\frac{1}{n}})^{2d})  \mbox{ as } N_0\rightarrow \infty .\]
Therefore 
\[ \sum\limits_{[\xi]\in\Gh, \langle\xi\rangle\geq N}d_{\xi}\|\widehat{f}(\xi)\|_{\HS}^2=\Ob(N^{-2\alpha}(\log N)^{2d})   \mbox{ as } N\rightarrow \infty .\]

A look at the proof above shows that the converse also holds.
\end{proof}
\end{thm}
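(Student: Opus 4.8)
The plan is to follow the architecture of the proof of Theorem \ref{Ti2}, replacing the polynomial Duren lemma by its logarithmic refinement Lemma \ref{dulemlog} so that the extra factor $(\log\frac{1}{|h|})^{d}$ is carried through the whole argument. First, by Lemma \ref{ewrt1} together with \eqref{EQ:clft1}--\eqref{EQ:clft2}, it is enough to prove the equivalence on a compact Lie group $G$ of dimension $n$, where $\Ghz=\Gh$. The starting point is the Plancherel identity \eqref{hy1bb}, which rewrites the $L^2$ modulus of continuity $\|f_h-f\|_{L^2(G)}^2$ as the spectral quantity $\|\whf(\xi)(\xi(h)-I_{d_\xi})\|_{\ell^2(\Gh)}^2$.

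I would then reuse the two-sided spectral estimate already available from Section \ref{SEC:ftlip}: the Taylor expansion \eqref{migtl} and the ellipticity argument give the lower bound \eqref{main1} and the upper bound \eqref{main1rf}, so that $\|\whf(\xi)(\xi(h)-I_{d_\xi})\|_{\HS}^2\asymp |h|^2\langle\xi\rangle^2\|\whf(\xi)\|_{\HS}^2$ for small $|h|$. For the forward direction, assuming \eqref{dinlip1aa}, I restrict the spectral sum to $\langle\xi\rangle\le 1/|h|$ and use the lower bound \eqref{main1} to obtain, upon dividing by $|h|^2$ and writing $N=1/|h|$,
\[
\sum_{[\xi]\in\Gh,\ \langle\xi\rangle\le N} d_\xi\langle\xi\rangle^2\|\whf(\xi)\|_{\HS}^2 = \Ob\!\big(N^{2-2\alpha}(\log N)^{2d}\big).
\]

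The crucial reduction is to a scalar sum. Using the Weyl eigenvalue enumeration $\lambda_k\approx k^{1/n}$ of $(I-\lap)^{1/2}$ exactly as in \eqref{iqwin}, with $b_k=(\|\whf(\xi)\|_{\HS}/\sqrt{d_\xi})^2$ and $c_k=d_k^2 b_k$, the last display becomes
\[
\sum_{k\le N_0} k^{2/n} c_k = \Ob\!\big(N_0^{(2-2\alpha)/n}(\log N_0)^{2d}\big),\qquad N_0=N^{n}.
\]
Here I would invoke the logarithmic Duren lemma, Lemma \ref{dulemlog}, with $a=\tfrac{2}{n}$, $b=\tfrac{2-2\alpha}{n}$ and exponent $2d$, to convert this head sum into the tail $\sum_{k\ge N_0}c_k=\Ob(N_0^{-2\alpha/n}(\log N_0)^{2d})$; undoing the enumeration yields exactly \eqref{dinlip1c0}. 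The converse direction is the same chain of implications run backwards: starting from \eqref{dinlip1c0}, one applies the upper bound \eqref{main1rf}, sums over $\langle\xi\rangle\ge N$ with the choice $N=1/|h|$, and uses the ``if'' half of Lemma \ref{dulemlog}.

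The points that need care are all concentrated in the admissibility hypothesis $0<b<a$ of Lemma \ref{dulemlog}: here $b<a$ forces $\alpha>0$ and $b>0$ forces $\alpha<1$, so the substance of the theorem lives in the range $0<\alpha<1$, while the endpoints $\alpha=0,1$ fall outside Lemma \ref{dulemlog} and would have to be addressed separately. One also has to note that the change of variables $N_0=N^{n}$ only rescales the logarithm by the constant $n$ (since $\log N_0=n\log N$), which is absorbed into the $\Ob$-constant and leaves the exponent $2d$ intact. The genuinely delicate bookkeeping---tracking the exponent $2d$ faithfully through the inductive integration by parts---has already been packaged into Lemma \ref{dulemlog}, so here it is only a matter of applying it cleanly; that is the step I expect to be the main obstacle to get exactly right.
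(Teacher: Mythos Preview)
Your proposal is correct and follows essentially the same route as the paper's own proof: reduction to $G$ via Lemma \ref{ewrt1}, Plancherel identity \eqref{hy1bb}, the two-sided spectral bound \eqref{main1}/\eqref{main1rf}, the Weyl enumeration \eqref{iqwin}, and finally Lemma \ref{dulemlog}. Your observation that the hypothesis $0<b<a$ of Lemma \ref{dulemlog} forces $0<\alpha<1$ (so that the endpoints $\alpha=0,1$ in the stated range $\alpha\geq 0$ are not covered by this argument) is a valid and useful caveat that the paper itself does not address.
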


To end this work we now give a Dini-Lipschitz version of Theorem \ref{Ti1}.

\begin{thm}\label{Ti1dl} 
Let $G/K$ be a compact homogeneous manifold of dimension $n$. Let $0<\alpha\leq 1$, $d\in\ar$, $1<p\leq 2$, and let $q$ be such that $\sdual$. 
Suppose that
\beq\|f(h\cdot)-f(\cdot)\|_{L^p(G/K)}=\Ob\left(|h|^{\alpha}\left(\log\frac{1}{|h|}\right)^{d}\right) \mbox{ as }|h|\rightarrow 0.
\label{dinlip1a}\eq
Then we have
\[\reallywidehat{(I-\lap)^{\half}f}\in \ell^{\beta}(\Ghz),\]
provided that either
\[\frac{n}{\alpha+n-\frac{n}{p}-1}< \beta\leq q \,\,\,\mbox{ for any }\,\,d\in\ar\]
or 
\[\frac{n}{\alpha+n-\frac{n}{p}-1}\leq \beta\leq q \,\,\,\mbox{ for  }\,\,d\leq 0.\]
Consequently, in both cases we have
\[\whf\in \ell^{\gamma}(\Ghz)\;\textrm{ for }\;\frac{np}{\alpha p+np-n}<\gamma\leq q.\] 
\end{thm}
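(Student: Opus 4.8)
The plan is to run the proof of Theorem \ref{Ti1} essentially verbatim, reducing first to a compact Lie group $G$ (as for Theorems A$'$ and B$'$: the computation in Lemma \ref{ewrt1} transfers the hypothesis \eqref{dinlip1a} to the canonical lift $\widetilde f$ on $G$ without changing the right-hand side), and then carrying the extra factor $(\log(1/|h|))^d$ through each estimate. The single point at which the argument genuinely differs from Theorem \ref{Ti1} is the final reading-off of convergence at the critical exponent, where the sign of $d$ enters.

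\emph{Main estimate.} First I would reuse, unchanged, the identity $\efee(f_h-f)(\xi)=\whf(\xi)(\xi(h)-I_{d_\xi})$, the Hausdorff--Young bound \eqref{hy1}, and the elliptic lower bound \eqref{main1}, all of which are independent of the regularity of $f$. The hypothesis \eqref{dinlip1a} only modifies the right-hand side of \eqref{hy1} to $\|f_h-f\|_{L^p(G)}^q=\Ob(|h|^{\alpha q}(\log(1/|h|))^{dq})$. Propagating this through the derivation of \eqref{main2} and setting $N=1/|h|$ gives
\[
\sum_{[\xi]\in\Gh,\ \jpb\leq N} d_\xi^{q(\frac{2}{q}-\frac12)}\jpb^{q}\|\whf(\xi)\|_{\HS}^q=\Ob\!\left(N^{(1-\alpha)q}(\log N)^{dq}\right).
\]
Applying the H\"older inequality with exponents $\frac{q}{\beta},\frac{q}{q-\beta}$ exactly as in the passage to \eqref{miqj3}, and \eqref{EQ:dims1} with $r=0$ for the second factor, I obtain
\[
\Phi(N):=\sum_{[\xi]\in\Gh,\ \jpb\leq N} d_\xi^{2}\left(\frac{\jpb\|\whf(\xi)\|_{\HS}}{\sqrt{d_\xi}}\right)^{\beta}=\Ob\!\left(N^{(1-\alpha)\beta+n(1-\frac{\beta}{q})}(\log N)^{d\beta}\right).
\]

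\emph{Reading off the conclusion.} Since $\Phi(N)$ is the non-decreasing sequence of partial sums of $\|\reallywidehat{(I-\lap)^{\half}f}\|_{\ell^\beta(\Gh)}^\beta$ (recall $\reallywidehat{(I-\lap)^{\half}f}(\xi)=\jpb\whf(\xi)$), the required membership holds iff $\Phi(N)$ stays bounded. Writing $E(\beta)=(1-\alpha)\beta+n(1-\frac{\beta}{q})$ and using $\frac1q=1-\frac1p$, one checks as in Theorem \ref{Ti1} that $E(\beta)\leq 0$ is equivalent to $\beta\geq\beta_0:=\frac{n}{\alpha+n-\frac{n}{p}-1}$. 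If $\beta>\beta_0$ then $E(\beta)<0$, and a strictly negative power of $N$ absorbs any power of $\log N$, so $N^{E(\beta)}(\log N)^{d\beta}\to0$ for every $d\in\ar$; hence $\Phi(N)$ is bounded and $\reallywidehat{(I-\lap)^{\half}f}\in\ell^\beta(\Gh)$. If $\beta=\beta_0$ then $E(\beta_0)=0$ and $\Phi(N)=\Ob((\log N)^{d\beta_0})$, which is bounded precisely when $d\beta_0\leq0$, i.e. when $d\leq0$. This yields exactly the two ranges in the statement.

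\emph{Consequence for $\whf$ and main difficulty.} For the last assertion I would apply Lemma \ref{le1w} to $H=\whf$ together with a limiting argument: $\jpb\whf(\xi)\in\ell^\beta(\Gh)$ for every $\beta\in(\beta_0,q]$, and since $\beta\mapsto\frac{n\beta}{n+\beta}$ is continuous and increasing with limit $\frac{n\beta_0}{n+\beta_0}=\frac{np}{\alpha p+np-n}$ as $\beta\to\beta_0^+$, any $\gamma>\frac{np}{\alpha p+np-n}$ is reached by fixing $\beta$ close enough to $\beta_0$ and invoking Lemma \ref{le1w}. The only genuinely new obstacle beyond Theorem \ref{Ti1} is the boundary bookkeeping just described: one must confirm that off the critical line the polynomial factor strictly dominates the logarithm for all $d$, while on the critical line survival of the endpoint is controlled exactly by the sign condition $d\beta_0\leq0$.
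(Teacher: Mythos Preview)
Your proposal is correct and follows essentially the same route as the paper's own proof: rerun the argument of Theorem~\ref{Ti1} with the extra logarithmic factor carried through to obtain the analogue of \eqref{miqj3} as $\Phi(N)=\Ob(N^{(1-\alpha)\beta+n(1-\beta/q)}(\log N)^{d\beta})$, then split into the subcritical case $(1-\alpha)\beta+n(1-\beta/q)<0$ (any $d$) and the critical case $=0$ with $d\leq 0$, and finally appeal to Lemma~\ref{le1w} for the conclusion on $\whf$. Your explicit limiting argument for the final assertion (needed since in the first case only $\beta>\beta_0$ is available) is a helpful clarification of what the paper leaves implicit in ``follows in the same way as in Theorem~\ref{Ti1}''; one small quibble is that ``bounded \emph{precisely} when $d\beta_0\leq 0$'' overstates things---you only have an upper bound on $\Phi(N)$, so only the ``if'' direction is established, but that is all the theorem requires.
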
 
\begin{proof} Arguing as in the proof of Theorem \ref{Ti1}, under assumptions of Theorem \ref{Ti1dl} 
instead of \eqref{miqj3} we arrive at
\beq\sum\limits_{[\xi]\in\Gh, \langle\xi\rangle\leq N}d_{\xi}^{2}\left(\frac{\langle\xi\rangle\|\whf(\xi)\|_{\HS}}{\sqrt{d_{\xi}}}\right)^{\beta}=O(N^{(1-\alpha)\beta+n(1-\frac{\beta}{q})}(\log N)^{d\beta}) \mbox{ as } N\rightarrow\infty.\label{miqj3a}\eq
So we get
\[\reallywidehat{(I-\lap)^{\half}f}\in \ell^{\beta}(\Ghz)\]
provided that $(1-\alpha)\beta+n(1-\frac{\beta}{q})<0$, or $(1-\alpha)\beta+n(1-\frac{\beta}{q})\leq 0$ and $d\leq 0$. The last conclusion follows in the same way as in Theorem \ref{Ti1}.
\end{proof}


\end{document}